\documentclass[12pt,leqno,a4paper]{amsart}
\usepackage{amssymb, amsmath, enumerate}
\usepackage{enumerate,color}
\usepackage[numbers]{natbib}
\usepackage{url}
\usepackage{enumitem}

\newtheorem{theorem}{Theorem}[section]
\newtheorem{lemma}[theorem]{Lemma}

\newtheorem*{thmA}{Theorem A}

\newtheorem*{thmB}{Theorem B}
\newtheorem*{thmC}{Theorem  C}
\newtheorem*{thmD}{Theorem  D}
\newtheorem*{thmE}{Theorem E}
\newtheorem*{CorF}{Corollary F}

\theoremstyle{remark}

\numberwithin{equation}{section}

\begin{document}
\title[Finite groups with simple or $p$-nilpotent maximal subgroups]
{Finite groups with simple or $p$-nilpotent maximal subgroups}
    \author[Beltr\'an and Viudez]{Antonio Beltr\'an\\
     Departamento de Matem\'aticas\\
      Universitat Jaume I \\
     12071 Castell\'on\\
      Spain\\
     \\Fernando Viudez \\
Departamento de Matem\'aticas\\
      Universitat Jaume I \\
     12071 Castell\'on\\
      Spain\\
     }

 \thanks{Antonio Beltr\'an: abeltran@uji.es ORCID ID: https://orcid.org/0000-0001-6570-201X \newline
 \indent Fernando Viudez: fviudez@uji.es }

\keywords{maximal subgroups, simple groups, $p$-nilpotent, $p$-decomposable}

\subjclass[2020]{20E28, 20D10, 20D06, 20D08}

\begin{abstract}
Let $p$ be a prime number. When $p$ is odd, we study finite groups in which every maximal subgroup is either non-abelian simple or $p$-nilpotent, as well as those in which
every  maximal subgroup is either non-abelian simple or $p$-decomposable. 
We prove that every non-simple, non-solvable group satisfying the first condition is $p$-nilpotent, and every non-simple, non-solvable group satisfying the second condition is $p$-decomposable. In addition, we determine the possible occurrence of non-abelian simple maximal subgroups in these cases. These results provide a substantial partial answer to two questions posed by V.S. Monakhov and I.N. Tyutyanov in the Kourovka Notebook concerning the non-abelian composition factors of such groups. For non-abelian simple groups, we determine those satisfying the corresponding conditions within the alternating and sporadic families. The case of simple groups of Lie type is left open. Finally, for $p=2$, we obtain a complete classification of the non-solvable finite groups whose maximal subgroups are either non-abelian simple or $2$-nilpotent.

\end{abstract}

\maketitle

\section{Introduction}
    Maximal subgroups play a decisive role in determining the structure of finite groups. Imposing strong conditions on these subgroups often  yields severe structural constraints on the whole group.  Classical precedents include the classification of minimal non-nilpotent groups (the so-called  Schmidt groups) and minimal non-$p$-nilpotent groups, which coincide with the ($p$-closed) Schmidt groups [\citenum{Huppert}, IV.5.4].   Further examples arise from the theory of minimal non-supersolvable groups, whose structure was investigated by Huppert and Doerk.
    
\medskip
In this paper, we study finite groups whose maximal subgroups are either non-abelian simple or $p$-nilpotent, and a related class in which $p$-nilpotent is replaced by $p$-decomposable. Our main results give a complete structural description of the non-simple, non-solvable groups in these classes. We also investigate the simple groups satisfying the same conditions in the alternating and sporadic cases, and obtain a complete classification in each family. 

    \medskip
In 2014, Monakhov and Tyutyanov initiated the study of finite groups whose maximal subgroups are non-abelian simple or $2$-nilpotent. More precisely, [\citenum{Monakhov}, Theorem 2] shows that such a group has at most one non-abelian composition factor. In fact, if $G$ is such a group, then it has a normal subgroup $K\cong {\rm PSL}_2(p^n)$, where $p$ is a prime and $n\geq 1$,  such that $|G:K|=1$ or a prime. The same paper also considers groups whose maximal subgroups are simple or nilpotent. Such groups are solvable  according to [\citenum{Monakhov}, Theorem 1]; more precisely, they are Schmidt groups. The authors also determined the structure of those groups in which every maximal subgroup is non-abelian simple or supersolvable [\citenum{Monakhov}, Corollary]. 
These results motivated two questions concerning odd primes posed by Monakhov and Tyutyanov in the Kourovka Notebook [\citenum{Kourovka}, Problems 19.57 and 19.58].  
\medskip
 
{\bf Question 1}. What are the non-abelian composition factors of a finite group in which every
maximal subgroup is simple or $p$-nilpotent for some fixed odd prime $p \in \pi(G)$?

\medskip
{\bf Question 2}. What are the non-abelian composition factors of a finite group in which every
maximal subgroup is simple or $p$-decomposable for some fixed odd prime $p \in \pi(G)$?

\medskip

The results obtained in this paper give a substantial partial answer to these questions. In particular, we completely determine the structure of the non-simple, non-solvable groups satisfying the corresponding hypotheses.
 For convenience, we say that a group $G$ satisfies  $(*_p)$ (respectively, ($*_{p-p'}$)) when every maximal subgroup of $G$ is either non-abelian simple or $p$-nilpotent (respectively, $p$-decomposable). 

\begin{thmA} Let $G$ be a finite non-simple, non-solvable group, and let $p\in \pi (G)$ be an odd prime.  Then $G$ satisfies $(*_p)$ if and only if $G$ is $p$-nilpotent.  Moreover, in that case, $G$ has at least one non-abelian simple maximal subgroup $M$ if and only if $G\cong  M\times C_p$ or $G\cong M\rtimes C_p$, where $M$ is a non-abelian simple $p'$-group. In either case, the maximal subgroup $M$ is unique.  
\end{thmA}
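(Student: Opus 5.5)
The easy direction is immediate. If $G$ is $p$-nilpotent, then every subgroup of $G$ is $p$-nilpotent, so $G$ satisfies $(*_p)$. For the converse, I would first show that $G$ must have a non-abelian simple maximal subgroup. If every maximal subgroup were $p$-nilpotent, then $G$ would be minimal non-$p$-nilpotent, hence a Schmidt group by [\citenum{Huppert}, IV.5.4], hence solvable, which is a contradiction. So fix a non-abelian simple maximal subgroup $M$.

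The crucial claim, and the step I expect to be the main obstacle, is that every non-abelian simple maximal subgroup $M$ of $G$ is a $p'$-group. I would organise it around a minimal normal subgroup $N$ of $G$, which is proper and nontrivial because $G$ is not simple.
\begin{enumerate}
\item Suppose $N\le M$. Then $N=M$, since $N \trianglelefteq M$ and $M$ is simple. So $M\trianglelefteq G$ with $G/M$ simple.
 \begin{enumerate}
 \item If $G/M$ is non-abelian simple, a maximal subgroup $L/M$ of $G/M$ gives a maximal subgroup $L$ of $G$ that is not simple, since $1<M\triangleleft L$. Hence $L$ is $p$-nilpotent, so its subgroup $M$ is $p$-nilpotent. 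A non-abelian simple $p$-nilpotent group must be a $p'$-group, so $M$ is a $p'$-group.
 \item Otherwise $|G:M|=r$ for a prime $r$.
 \end{enumerate}
\item Suppose $N\not\le M$. Then $G=NM$ and $N\cap M\trianglelefteq M$. Since $N\cap M\ne M$, we get $N\cap M=1$ and $G=N\rtimes M$. For every maximal subgroup $X$ of $M$, the subgroup $NX$ is maximal in $G$ and has the proper nontrivial normal subgroup $N$, so $NX$ is $p$-nilpotent.
\end{enumerate}
In case 2, and in case 1(b) when $p$ divides $|M|$, I would look for a contradiction. I would pick a maximal subgroup $X$ of $M$ that contains a Sylow $p$-subgroup of $M$, for instance one containing $N_M(P)$. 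Since $M$ is not $p$-nilpotent, Frobenius' normal $p$-complement theorem gives a nontrivial $p$-subgroup of $M$ whose normalizer in $M$ is not $p$-nilpotent, and I would try to place that normalizer inside some $NX$, or inside a suitable maximal subgroup containing $M$-conjugates, to contradict $p$-nilpotence. I expect this to need some case analysis, possibly using that $N$ is characteristically simple or elementary abelian.

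Once every simple maximal subgroup is known to be a $p'$-group, I would apply Frobenius' criterion to $G$. For a nontrivial $p$-subgroup $Q$, the normalizer $N_G(Q)$ is proper: if $Q\trianglelefteq G$, I would first pass to $G/O_p(G)$, or note that some maximal subgroup then contains $Q$ and is therefore not a simple $p'$-group. So $N_G(Q)$ lies in a maximal subgroup $K$. If $K$ is simple, it is a $p'$-group and cannot contain $Q$, so $K$ is $p$-nilpotent, and hence so is $N_G(Q)$. Therefore $G$ is $p$-nilpotent.

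For the \emph{moreover} part, write $G=K\rtimes P$ with $K$ the normal $p$-complement. A simple maximal $M$ is a $p'$-group, so $M\le K$ and maximality forces $M=K$, so $M\trianglelefteq G$. This also shows $M$ is unique, because every non-abelian simple maximal subgroup is equal to $K$. Then $G/M\cong P$ is a $p$-group whose only proper subgroup containing nothing beyond $M$ is trivial, so $|P|=p$ and $G\cong M\rtimes C_p$. This includes the direct product $M\times C_p$ when $C_p$ centralizes $M$. Conversely, in $M\rtimes C_p$ with $M$ a simple $p'$-group, $M$ is maximal of prime index.
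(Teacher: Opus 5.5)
\textbf{There is a genuine gap.} It is exactly where you expected trouble, and it is not a matter of case analysis.

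In your case 1(b), $M\trianglelefteq G$ is non-abelian simple of prime index $r$ with $p\mid |M|$. The only usable information is that every maximal subgroup $L\neq M$ of $G$ is $p$-nilpotent: a simple such $L$ would satisfy $L\cap M=1$, hence $|L|\le r$. A Frobenius-type argument needs $N_M(Q)$ to be $p$-nilpotent for \emph{every} nontrivial $p$-subgroup $Q\le M$. You only obtain this when $N_G(Q)\nleq M$, since then $N_G(Q)$ lies in a maximal subgroup different from $M$. The $p$-subgroups whose normalizers lie inside $M$ are uncontrolled.

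No argument of the kind you sketch can work, because your tools never use that $p$ is odd, and the statement is false for $p=2$. The group $\mathrm{PGL}_2(7)$ (Theorem E(2)) has maximal subgroups $\mathrm{PSL}_2(7)$, $7:6$, $D_{12}$ and $D_{16}$. So it satisfies $(*_2)$, is non-simple, non-solvable and not $2$-nilpotent, and it falls precisely in your case 1(b).

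The missing ingredient is the Glauberman--Thompson normal $p$-complement theorem for odd $p$; the paper uses it through [\citenum{BeltranShao2023Invariant}, Corollary 4.2]. A direct version goes as follows.
\begin{enumerate}
\item Take $P_0\in\operatorname{Syl}_p(M)$ and $Z=Z(J(P_0))\neq 1$.
\item Since $Z$ is characteristic in $P_0$, the Frattini argument gives $G=MN_G(P_0)\le MN_G(Z)$, so $N_G(Z)\nleq M$.
\item Also $N_G(Z)<G$, because otherwise $1\neq Z\trianglelefteq M$, contradicting the simplicity of $M$.
\item Hence $N_G(Z)$ lies in a maximal subgroup other than $M$, which is $p$-nilpotent. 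So $N_M(Z)$ is $p$-nilpotent, and Glauberman--Thompson makes $M$ $p$-nilpotent, a contradiction.
\end{enumerate}

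\textbf{Smaller points.}
\begin{enumerate}
\item Your case 1(a) is vacuous, since a normal maximal subgroup always has prime index.
\item Case 2 needs no special choice of $X$. Every maximal subgroup $X$ of $M$ lies in the $p$-nilpotent group $NX$, so $M$ is $p$-nilpotent by It\^o's theorem, hence a $p'$-group. This is the paper's Case 1.
\item In your final Frobenius step, the case $O_p(G)\neq 1$ is not handled. Passing to $G/O_p(G)$ only shows that $G/O_p(G)$ is $p$-nilpotent, which does not force $G$ to be (take $S_3$ with $p=3$). Instead, argue as the paper does:
\begin{itemize}
\item Since $M$ is a $p'$-group, $O_p(G)\nleq M$, so $G=O_p(G)\rtimes M$.
\item For each Sylow $q$-subgroup $Q$ of $M$, the proper subgroup $O_p(G)Q$ lies in a maximal subgroup containing $O_p(G)$. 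That subgroup cannot be non-abelian simple, so it is $p$-nilpotent.
\item This gives $[O_p(G),Q]=1$ for all such $Q$, hence $G=O_p(G)\times M$.
\end{itemize}
\end{enumerate}
Your treatment of the \emph{moreover} part is correct once the first part is established.
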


Theorem A provides a complete description of the non-simple, non-solvable groups that satisfy $(*_p)$. In particular, the only possible non-abelian composition factors of such a group are non-abelian simple $p'$-groups. Conversely, every non-abelian simple $p'$-group can occur as a composition factor of a finite group satisfying $(*_p)$. Furthermore, applying Theorem A yields the following result.

\begin{thmB} Let $G$ be a finite non-simple, non-solvable group, and  let $p\in \pi (G)$ be an odd prime.  Then $G$ satisfies $(*_{p-p'})$ if and only if $G$ is $p$-decomposable.  Furthermore,  in this case, $G$ possesses at least one non-abelian simple maximal subgroup if and only if $G\cong  M\times C_p$ where $M$ is a non-abelian simple $p'$-group. In this situation, such a maximal  subgroup is unique.  
\end{thmB}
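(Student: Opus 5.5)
The plan is to derive Theorem B from Theorem A. A $p$-decomposable group, that is $G=P\times H$ with $P$ a Sylow $p$-subgroup and $H$ a normal $p$-complement, is in particular $p$-nilpotent. Hence $(*_{p-p'})$ implies $(*_p)$.

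\textbf{Forward direction.} Let $G$ be non-simple, non-solvable and satisfy $(*_{p-p'})$. Theorem A gives that $G$ is $p$-nilpotent, so $G=K\rtimes P$ with $K=O_{p'}(G)$ and $P\in{\rm Syl}_p(G)$. We must show $P$ centralizes $K$. Split according to whether $G$ has a non-abelian simple maximal subgroup.

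Case 1: $G$ has a non-abelian simple maximal subgroup $M$. Theorem A gives $G\cong M\times C_p$ or $M\rtimes C_p$ with $M$ a simple $p'$-group. In the semidirect case with $C_p=\langle x\rangle$ acting nontrivially, I will exhibit a maximal subgroup that is neither simple nor $p$-decomposable. Take a maximal subgroup of $G$ containing $\langle x\rangle$ but not $M$, of the form $N_M(R)\langle x\rangle$ or $C_M(x)\langle x\rangle$. Such a subgroup is not simple, since it has normal subgroup $\langle x\rangle$ or one of index $p$. It must then be $p$-decomposable, which means $x$ centralizes its $p'$-part.

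The cleanest route is a Frattini argument on an $x$-invariant Sylow $r$-subgroup $R$ of $M$ for a prime $r\ne p$ (such $R$ exists by counting Sylow subgroups modulo $p$). Then $N_G(R)=N_M(R)\langle x\rangle$ is proper, since $M$ is simple and $R\neq 1$. Every maximal subgroup containing it has the form $L\langle x\rangle$ with $L<M$, because it is not $M$ and $G=M\langle x\rangle$. By $(*_{p-p'})$, $x$ centralizes $L\supseteq N_M(R)$, so $x$ centralizes $R$ for every prime $r\mid |M|$ with such an invariant $R$. Taking all primes, $x$ centralizes a generating set of $M$ (the union of these Sylow subgroups generates a normal subgroup of $M$, hence all of $M$). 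This contradicts nontrivial action, so $G\cong M\times C_p$.

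Case 2: every maximal subgroup is $p$-decomposable. Then every maximal subgroup is nilpotent-like with respect to $p$, in the sense that $G$ is minimal non-$p$-decomposable if it is not $p$-decomposable. Minimal non-$p$-decomposable groups are known to be solvable; if needed, this can be argued via $p$-nilpotence plus the fact that each $N_G(R)$, $R\in{\rm Syl}_r(K)$, lies in a maximal subgroup where $P$ centralizes. This contradicts non-solvability, so $G$ is $p$-decomposable.

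\textbf{Converse and uniqueness.} If $G=P\times K$, every maximal subgroup has the form $P_0\times K$ with $P_0$ maximal in $P$, or $P\times K_0$ with $K_0$ maximal in $K$. Both forms are $p$-decomposable, so $(*_{p-p'})$ holds. Simple maximal subgroups occur only as $P\times K_0$ with $P=1$ (impossible, since $p\in\pi(G)$), or as $P_0\times K$ with $P_0=1$. The latter forces $|P|=p$ and $K=M$ simple, giving $G\cong M\times C_p$. Uniqueness is inherited from Theorem A.

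The main obstacle is the semidirect case and Case 2, that is, showing that the $p$-element acts trivially. I will try the Sylow-normalizer argument first, falling back on the known solvability of minimal non-$p$-decomposable groups.
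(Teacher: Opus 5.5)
Your proof is correct and uses the same mechanism as the paper: a $P$-invariant Sylow $r$-subgroup $R$ of the $p'$-part lies, together with $P$, in a maximal subgroup that is forced to be $p$-decomposable, so $[R,P]=1$ for every prime $r$. The paper avoids your case split by observing that a non-abelian simple maximal subgroup of the $p$-nilpotent group $G$ is a $p'$-group and hence already $p$-decomposable, and then runs this argument once on $O_{p'}(G)$ using $RP$ rather than $N_G(R)$. Use $RP$ in your Case 2 fallback as well, since $N_G(R)$ can be all of $G$ there, whereas $RP\cap O_{p'}(G)=R$ keeps $RP$ proper.
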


Moreover, Theorem B shows that the non-abelian composition factors of such groups are non-abelian simple \(p'\)-groups. Conversely, every non-abelian simple \(p'\)-group may occur as a composition factor of a finite group satisfying $(*_{p-p'})$.

\medskip

The remaining  case in Questions 1 and 2 is that in which the group itself is non-abelian simple. We determine the simple groups satisfying $(*_p)$ or $(*_{p-p'})$  within the alternating and sporadic simple groups.

\begin{thmC}
Let $A_n$ be the alternating group, where  $n \ge 5$, and let  $p \in \pi(A_n)$.  The conditions $(\ast_p)$ and $(\ast_{p-p'})$ are equivalent for $A_n$. Furthermore, $A_n$ satisfies either (and hence both) of these properties if and only if:
\begin{itemize}
    \item[\normalfont(a)] $p = n - 1$  and $n$ is not a prime power; or
    \item[\normalfont(b)] $p = n$ and $n \in \{7, 11, 23\}$.
\end{itemize}
\end{thmC}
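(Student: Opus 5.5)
The plan is to work through the maximal subgroups of $A_n$ directly. First I will use the O'Nan--Scott description: every maximal subgroup of $A_n$ is intransitive, $(S_k\times S_{n-k})\cap A_n$, imprimitive, $(S_a\wr S_b)\cap A_n$ with $ab=n$, or primitive. Since $p\in\pi(A_n)$, we have $p\le n$.

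I begin with the equivalence of $(*_p)$ and $(*_{p-p'})$. A $p$-decomposable group is $p$-nilpotent, so $(*_{p-p'})$ implies $(*_p)$. For the converse, it suffices to show that in every case where $(*_p)$ holds, each $p$-nilpotent maximal subgroup is in fact $p$-decomposable. That will come out of the case analysis below, because there the Sylow $p$-subgroup is cyclic of order $p$.

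\emph{Necessity.} Assume $A_n$ satisfies $(*_p)$.

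\begin{itemize}
\item First suppose $p\le n-2$. The point stabiliser $A_{n-1}$ is simple when $n\ge 6$. For small $n$, or when $n-1=p$ fails, I instead look at intransitive subgroups $(S_k\times S_{n-k})\cap A_n$ with $1\le k<n/2$. These are never simple for $2\le k$. Choosing $k=2$ (or $k=3$) gives a subgroup containing $A_{n-2}$ (or $A_{n-3}$) as a normal subgroup. If $p\le n-3$ this subgroup is not $p$-nilpotent, since $A_{n-2}$ contains a non-$p$-nilpotent section; for example, $A_m$ with $m\ge p+1\ge 4$ contains a $p$-cycle normalised by a $p'$-element acting nontrivially. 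The borderline cases $n=p+2$ and small $n$ I will handle by direct inspection. The conclusion is $p\in\{n-1,n\}$.
\item If $p=n-1$, then $n$ is even. Every maximal subgroup other than $A_{n-1}$ has order coprime to $p$ or a Sylow $p$-subgroup of order $p$. If $n=q^m$ is a prime power, the affine subgroup $\mathrm{AGL}_m(q)\cap A_n$ is maximal, and its point stabiliser $\mathrm{GL}_m(q)$ contains a Singer cycle of order $q^m-1=p$. Then the subgroup is a Frobenius-type group $q^m\rtimes C_p\cdots$, which is not $p$-nilpotent, because $C_p$ acts fixed-point-freely on the normal $q$-subgroup. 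So $n$ cannot be a prime power.
\item If $p=n$, the $p$-cycles generate transitive subgroups. The maximal subgroup $\mathrm{AGL}_1(p)\cap A_p=C_p\rtimes C_{(p-1)/2}$ is non-$p$-nilpotent whenever $p>3$. So $A_p$ fails $(*_p)$ unless $\mathrm{AGL}_1(p)\cap A_p$ is not maximal. By the known list (Liebeck--Praeger--Saxl), this happens exactly for $p\in\{7,11,23\}$, where the normaliser is contained in $\mathrm{PSL}_3(2)$, $M_{11}$, $M_{23}$ respectively.
\end{itemize}

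\emph{Sufficiency.} In case (a) I show that every maximal subgroup $M\neq A_{n-1}$ has order prime to $p$. A Sylow $p$-subgroup is generated by a $p$-cycle fixing one point. Any subgroup containing a $p=(n-1)$-cycle is $2$-transitive or intransitive, hence contained in $A_{n-1}$, or it is primitive containing an $(n-1)$-cycle. By the classification of primitive groups containing a cycle fixing one point (Jones/Müller), such groups are $A_n$, or affine (requiring $n$ a prime power), or $\mathrm{PSL}_2(q)$-type on $q+1$ points ($n=q+1$, $p=q$). In the last case $p=n-1$ with $n=p+1$, and the group $\mathrm{PGL}_2(p)\cap A_n$ is not $p$-nilpotent. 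I expect this is where the case analysis has to be made to close, and this step is the main obstacle: I must check that this subgroup is either not maximal, or that the statement's conditions exclude it. If it genuinely obstructs, I will check it against the statement, by comparing $\mathrm{PSL}_2(p)$ maximality in $A_{p+1}$.

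In case (b) I verify via the Atlas that the maximal subgroups of $A_7$, $A_{11}$ and $A_{23}$ containing a $p$-cycle are $\mathrm{PSL}_2(7)$, $M_{11}$ and $M_{23}$, which are simple. All others are $p'$-groups, hence $p$-nilpotent and in fact $p$-decomposable.
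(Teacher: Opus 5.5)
Your case division is the same as the paper's. However, several steps fail as written.

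\textbf{Necessity, case $p=n$.} The primes for which $\mathrm{AGL}_1(p)\cap A_p$ is \emph{not} maximal in $A_p$ are $p\in\{7,11,17,23\}$, not $\{7,11,23\}$. For $p=17$ one has $C_{17}\rtimes C_8<\mathrm{P\Gamma L}_2(16)\cong \mathrm{PSL}_2(16){:}C_4<A_{17}$. So your argument gives no reason why $A_{17}$ fails $(\ast_{17})$. You need the maximal subgroup $\mathrm{PSL}_2(16){:}C_4$ of $A_{17}$. It is neither simple nor $17$-nilpotent, because its simple normal subgroup $\mathrm{PSL}_2(16)$ has order divisible by $17$.

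\textbf{Necessity, case $p=n-1=2^d-1$.} Your reason that $\mathrm{AGL}_d(2)$ is not $p$-nilpotent is wrong. In $C_2^d\rtimes C_p$ the normal subgroup $C_2^d$ is a normal $p$-complement, so that subgroup \emph{is} $p$-nilpotent. A $p$-element acting fixed-point-freely on a normal $p'$-subgroup obstructs $2$-nilpotence, not $p$-nilpotence. The correct argument is that $\mathrm{AGL}_d(2)/C_2^d\cong\mathrm{GL}_d(2)$ is non-abelian simple for $d\ge 3$ and has order divisible by $p$. Alternatively, the Singer cycle is normalised in $\mathrm{GL}_d(2)$ by a field automorphism of order $d$ that acts nontrivially on it.

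\textbf{Sufficiency in case (a).} You leave this open, and its opening claim is false. $M_{12}<A_{12}$ and $\mathrm{PSL}_2(13)<A_{14}$ are maximal subgroups other than $A_{n-1}$ whose orders are divisible by $p=n-1$. You also dropped the Mathieu groups from case (2)(c) of Jones's theorem.

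The missing observation is that Jones's theorem applies to the primitive maximal subgroup $H$ itself, and $\mathrm{PGL}_2(p)\nleq A_{p+1}$. Indeed, $z\mapsto az$, with $a$ a generator of $\mathbb F_p^\times$, fixes $0$ and $\infty$ and is a $(p-1)$-cycle on the remaining points, hence odd. Since $n$ is not a prime power, the affine case is excluded. Hence $H\cong\mathrm{PSL}_2(p)$ (equivalently, $\mathrm{PGL}_2(p)\cap A_{p+1}=\mathrm{PSL}_2(p)$) or $H$ is a Mathieu group of degree $12$ or $24$. All of these are non-abelian simple, so the obstruction you feared does not exist.

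\textbf{Equivalence of the two conditions.} The corrected sufficiency argument shows that every non-simple maximal subgroup is a $p'$-group. That is the real reason $(\ast_p)$ and $(\ast_{p-p'})$ coincide. A Sylow $p$-subgroup of order $p$ is not enough: $C_{23}\rtimes C_{11}\le M_{23}$ is $11$-nilpotent but not $11$-decomposable.

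\textbf{The case $A_{23}$.} $A_{23}$ is not in the Atlas. The paper instead uses Jones's theorem. A maximal subgroup containing a $23$-cycle is primitive. The affine candidate $C_{23}\rtimes C_{11}$ lies inside $M_{23}$. The equation $23\neq (q^d-1)/(q-1)$ rules out the projective case. This leaves only $M_{23}$.
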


\begin{thmD}
Let \(G\) be a sporadic simple group and let \(p\in\pi(G)\). Then \(G\) satisfies \((\ast_p)\) if and only if one of the following cases occurs:
\begin{enumerate}[label=\normalfont(\alph*)]
    \item \(p=11\) and \(G\cong M_{11}, M_{12}, M_{22}, M_{23}, HS, McL, O'N\);
    \item \(p=13\) and \(G\cong \mathrm{Fi}_{22}\);
    \item \(p=19\) and \(G\cong J_3\);
    \item \(p=23\) and \(G\cong M_{24}, \mathrm{Co}_3, \mathrm{Co}_2\);
    \item \(p=29\) and \(G\cong Ru\);
    \item \(p=31\) and \(G\cong O'N\);
    \item \(p=71\) and \(G\cong M\).
\end{enumerate}
Moreover, \(G\) satisfies \((\ast_{p-p'})\) if and only if one of the cases above occurs, except for \(G\cong M_{23}\) and \(p=11\).
\end{thmD}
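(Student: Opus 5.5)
This is a finite check over the 26 sporadic groups, using the ATLAS lists of maximal subgroups (complete for all sporadic groups, including the Monster after the recent completion of its maximal subgroup classification). For a maximal subgroup $H$ of $G$ and $p\in\pi(G)$, the condition $(\ast_p)$ requires that $H$ be non-abelian simple or $p$-nilpotent.

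\emph{Reduction to large primes.} A $p$-nilpotent maximal subgroup $H$ with $p$ dividing $|H|$ has $N_H(P)/C_H(P)$ a $p$-group for $P\in{\rm Syl}_p(H)$ (Frobenius). First I will show that small primes never work. For $p\in\{3,5,7\}$ in each sporadic group, I will exhibit a maximal subgroup that is neither simple nor $p$-nilpotent. Typically this is a $p$-local maximal subgroup $N_G(P_0)$ whose quotient by $O_p$ has order divisible by a prime other than $p$, or an almost simple maximal subgroup such as ${\rm Aut}$ of something, or $2\times S$ with $S$ non-$p$-nilpotent. For example, $N_G(\langle x\rangle)$ with $x$ of order $p$ usually induces a non-trivial $p'$-group of automorphisms on $\langle x\rangle$.

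Next, for each $G$ I will take $p$ to be a large prime dividing $|G|$ and inspect which maximal subgroups have order divisible by $p$. A maximal subgroup $H$ with $p\nmid |H|$ is automatically $p$-nilpotent. So $(\ast_p)$ reduces to the following: \emph{every maximal subgroup $H$ with $p\mid |H|$ is either non-abelian simple or $p$-nilpotent.} When $|G|_p=p$, as happens for all the primes in the list, this amounts to $N_H(P)=C_H(P)$ for the Sylow $p$-subgroup $P$ of $H$. Equivalently, the element of order $p$ has no non-trivial normalizing $p'$-automorphism inside $H$, unless $H$ is simple.

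\emph{Case analysis.} I will go through the groups one by one and record the maximal subgroups containing $p$-elements.
\begin{itemize}
\item In $M_{11}$ with $p=11$, these are only $L_2(11)$, which is simple. The same check works for $M_{12}$, $M_{22}$, $HS$, $McL$, $O'N$ and $J_3$.
\item The Frobenius group $11{:}5$ is maximal in $M_{22}$? No, it is not maximal there. This kind of question is exactly what must be checked: whether the Sylow normalizer $p{:}(p-1)/k$ is itself maximal. When it is, it is non-$p$-nilpotent, so $(\ast_p)$ fails. For instance, $23{:}11$ is maximal in $M_{23}$ but not in $M_{24}$. Hence $M_{23}$ fails for $p=23$, while it succeeds for $p=11$ because $M_{23}$ has $M_{11}$ and $23{:}11$ (whose $11$-part is $p$-nilpotent, since $23{:}11$ has normal $11$-complement $C_{23}$).
\item In the Monster with $p=71$, the relevant maximal subgroup is $L_2(71)$. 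Here I will rely on the recent classification.
\end{itemize}
For the groups and primes not on the list, I will exhibit a witness in each case. The typical witnesses are: a maximal $p{:}(p-1)/k$; an $L_2(p)\times$ something or $L_2(p){:}2$; an almost simple non-simple maximal subgroup containing $p$-elements, such as $M_{11}$ inside $M_{12}$? (simple, fine) versus $L_2(11){:}2$; or a direct product like $2\times$ simple.

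\emph{The $p$-decomposable refinement.} For the second statement, $p$-decomposable means $H=P\times O_{p'}(H)$. So in each surviving case I additionally require every non-simple maximal $H$ with $p\mid |H|$ to have a central Sylow $p$-subgroup. This kills $M_{23}$ with $p=11$, because in $23{:}11$ the $11$-element does not centralize $C_{23}$. In the remaining cases, the relevant non-simple maximal subgroups either have order prime to $p$ or have central Sylow $p$-subgroups, for example $11\times$ something is absent.

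The main obstacle is the bookkeeping for large groups ($Fi_{22}$, $Co_2$, $Co_3$, $Ru$, $O'N$, $M$). There one must be certain of which maximal subgroups have order divisible by $p$, and compute $N_H(P)$ for each. I would do this with the ATLAS orders together with a GAP/character-table-library check of the class fusion and power maps.
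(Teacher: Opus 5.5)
\textbf{The plan follows the paper's route, but it is not carried out, and one of its promised steps fails.}

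Your route is the paper's route. You inspect the complete lists of maximal subgroups group by group, note that a maximal subgroup of $p'$-order is automatically $p$-nilpotent, and when $|G|_p=p$ use Burnside's criterion $N_H(P)=C_H(P)$. But the proposal stops at the plan. Apart from $M_{11}$, $M_{23}$ and $p=71$ in $M$, you give no witnesses and no checks, and that bookkeeping is the whole content of the theorem.

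Worse, your central promise, ``for the groups and primes not on the list, I will exhibit a witness in each case'', cannot be kept for $G=M$, $p=59$. Here $|M|_{59}=59$, and the two classes $59A,59B$ have centralizer order $59$, so $N_M(P)=59{:}29$ for $P\in\mathrm{Syl}_{59}(M)$. However, $M$ has a class of maximal subgroups $L_2(59)$ (Holmes--Wilson; as far as I know it is one of the $46$ classes in the Dietrich--Lee--Popiel list the paper cites). The Borel subgroup of $L_2(59)$ is exactly $59{:}29$, so $59{:}29$ is not maximal. Checking the orders of the $46$ classes shows that $L_2(59)$ is the only maximal subgroup of $M$ of order divisible by $59$. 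Hence $M$ satisfies $(\ast_{59})$ and $(\ast_{59-59'})$.

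The paper's case (26) uses $59{:}29$ as a witness. This appears to be an oversight: it is the same situation as $71{:}35\le L_2(71)$, which the paper treats correctly. So the ``only if'' direction as stated cannot be established by your method or by the paper's. An honest execution of your check should instead produce the list with the extra entry $(p,G)=(59,M)$.

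Several of your heuristics also need repair.
\begin{itemize}
\item Your small-prime reduction omits $p=2$.
\item The test ``the quotient of $N_G(P_0)$ by $O_p$ has order divisible by another prime'' does not give non-$p$-nilpotence; take $P_0\times K$ with $K$ a $p'$-group. The correct test is that $N_H(P_0)/C_H(P_0)$ is not a $p$-group: if $H$ has a normal $p$-complement $K$, then $[P_0,K]\le P_0\cap K=1$.
\item A non-$p$-nilpotent Sylow normalizer is no witness by itself, since it may lie only in simple maximal subgroups ($11{:}5\le L_2(11)$ in $M_{11}$, $59{:}29\le L_2(59)$ in $M$). That is precisely what produces the positive cases. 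The paper avoids this by choosing, for each group, one large non-simple maximal subgroup that kills most primes at once ($S_5$ in $M_{11}$, $2\cdot B$ in $M$). It then treats the few remaining large primes individually.
\item The 1985 ATLAS lists are not complete for several large groups. This is why the paper relies on later classifications for $\mathrm{Fi}_{22}$, $\mathrm{Fi}_{23}$, $\mathrm{Fi}_{24}'$, $J_4$, $Th$, $B$ and $M$.
\end{itemize}
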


The case of simple groups of Lie type is not considered here and remains open. Thus, our results provide a partial but substantial answer to Questions 1 and 2.

\medskip

We finally turn to the case $p=2$. As mentioned above, this case was previously studied by Monakhov and Tyutyanov \cite{Monakhov}. Here we refine their main result by completely classifying the non-solvable finite groups satisfying  $(*_2)$. Our proof relies, among other things, on the main result of \cite{Shao-Beltran}. 

\begin{thmE}
Let $G$ be a finite non-solvable group. Then $G$ satisfies $(\ast_2)$ if and only if $G$ is isomorphic to one of the following groups:
\begin{itemize}
\item[\normalfont(1)] 
\(
\mathrm{PSL}_2(q),
\)
where \(q=p^f\geq 5\) is odd and one of the following holds:
\begin{itemize}
\item[\normalfont(a)] \(f=1\) and \(p\equiv \pm 11,\pm19 \pmod {40}\);
\item[\normalfont(b)] \(f>1\) is odd and \(q\neq 3^\ell\) for every odd prime \(\ell\).
\end{itemize}
\vspace{0,3 cm}
\item[\normalfont(2)] 
\(
\mathrm{PGL}_2(q),
\)
where $q=p^{2^a}$, $p$ is an odd prime and $a\geq 0$, with $p\equiv \pm1\pmod 8$ when $a=0$;
\vspace{0,3 cm}
\item[\normalfont(3)] $G$ is isomorphic to the following subgroup of $\mathrm{P\Gamma L}_2(q)$
\[
\langle \mathrm{PSL}_2(q),\delta\varphi^{2^{a-1}}\rangle,
\]
where  $q=p^{2^a}$,  $a\geq 1$, with $p$ being an odd prime, and where $\delta$ and  $\varphi$ denote the diagonal and  the field automorphism induced by the Frobenius automorphism of  \(\mathrm{PSL}_2(q)\), respectively.
\end{itemize}
\end{thmE}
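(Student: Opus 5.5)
The plan is to reduce to almost simple groups with socle $\mathrm{PSL}_2(q)$ and then run through the maximal subgroups case by case. First I would invoke [\citenum{Monakhov}, Theorem 2]. It gives a normal subgroup $K\cong \mathrm{PSL}_2(r)$ with $|G:K|\in\{1,\ell\}$, $\ell$ prime. Next I would show $C_G(K)=1$. If $C=C_G(K)\neq 1$, then $KC=K\times C$. Any maximal subgroup containing $C$, or one of the form $H\times C$ with $H<K$ maximal, would be neither simple nor $2$-nilpotent, since it contains a non-$2$-nilpotent section of $K$ (every maximal subgroup of $K$ is non-$2$-nilpotent once it is dihedral of order divisible by $4$, $A_4$, and so on). The exception is $G=K\times C_\ell$; there a maximal subgroup $H\times C_\ell$ with $H$ maximal in $K$ would have to be $2$-nilpotent. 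That forces all maximal subgroups of $K$ to be $2$-nilpotent, which is impossible since $K$ has even order and is not a minimal simple group of that type. So $G$ is almost simple with socle $K$ and $|G:K|\le$ a prime. Since $|\mathrm{Out}(K)|$ divides $2f\cdot$(2-part), $G/K$ is either trivial, of order $2$, or generated by a field automorphism of odd prime order $\ell$.

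For $G=K=\mathrm{PSL}_2(q)$, I would go through Dickson's list of maximal subgroups. If $q$ is even, the Borel subgroup $E_q\rtimes C_{q-1}$ is not $2$-nilpotent, so $q$ is odd. The dihedral subgroups $D_{q\pm1}$ are $2$-nilpotent (they are $2$-groups extended by odd cyclic). The Borel subgroup $E_q\rtimes C_{(q-1)/2}$ is $2$-nilpotent. The subgroups $A_4$ and $S_4$ are not $2$-nilpotent, and $A_5$ must not be maximal unless it is simple, which it is, so $A_5$ is allowed. The subfield subgroups $\mathrm{PSL}_2(q_0)$ are simple but $\mathrm{PGL}_2(q_0)$ is not, so they must be absent. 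The conditions therefore become: no maximal $A_4$ or $S_4$, and no maximal $\mathrm{PGL}_2(q_0)$. For $f=1$, $S_4$ is maximal iff $p\equiv\pm1\pmod 8$, and $A_4$ is maximal iff $p\equiv \pm3\pmod 8$ and $p\not\equiv\pm1\pmod{10}$. Combining these with the $A_5$ condition gives (a). For $f>1$, $A_4$ is maximal only when $q=p$ or $q=3^\ell$, and $\mathrm{PGL}_2(q_0)$ occurs exactly when $f$ is even. This gives (b); here I must also check the case $q=3^\ell$ with $\ell$ an odd prime, where $A_4$ is maximal.

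For $|G:K|=2$, the candidates are $\mathrm{PGL}_2(q)$, $K\langle\varphi\rangle$ and $K\langle\delta\varphi\rangle$, where $\varphi$ is an involutory field automorphism. Here no maximal subgroup is simple except possibly $K$ itself, which is allowed. So every other maximal subgroup must be $2$-nilpotent, and I would use the lists from [\citenum{Shao-Beltran}] (or Giudici's tables for $\mathrm{PSL}_2(q)$-overgroups). In $\mathrm{PGL}_2(q)$ the dihedral subgroups $D_{2(q\pm1)}$ and the Borel subgroup are fine. $S_4$ is maximal when $q=p\equiv\pm3\pmod 8$, and this has to be excluded. The subfield subgroups $\mathrm{PGL}_2(q_0)$ with $q=q_0^{r}$, $r$ an odd prime, are not $2$-nilpotent, which forces $f$ to be a power of $2$. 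This gives (2). The group $K\langle\varphi\rangle$ contains $\mathrm{PGL}_2(\sqrt q)$ as a maximal subgroup, so it is excluded. For $K\langle\delta\varphi\rangle$ with $q=p^{2^a}$, the field of definition argument again forces $f=2^a$, giving (3). For odd $\ell$, $K\langle\varphi\rangle$ contains $N_G(\text{Borel})$ and $\mathrm{PSL}_2(q^{1/\ell})\times C_\ell$-type subgroups, and the latter are neither simple nor $2$-nilpotent, so this case is excluded.

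The converse direction is a verification against the same lists. I expect the main obstacle to be the bookkeeping of the maximal subgroups of the almost simple groups with index $2$, in particular the novelty maximals and the $\delta\varphi$ case. There I will rely on the known tables and check $2$-nilpotence of each family directly.
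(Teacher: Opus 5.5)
Your outline is correct and reaches the same list. Its skeleton is the paper's: the Monakhov--Tyutyanov reduction, $C_G(K)=1$ via a maximal subgroup $U\times C_\ell$, the maximal subgroups of $\mathrm{PSL}_2(q)$ for $G=K$, and Giudici's lists for the index-$2$ overgroups. The difference is in how you handle the non-simple case.

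The paper notes that when $|G:T|$ is prime every maximal subgroup is normal or $2$-nilpotent. It then applies [\citenum{Shao-Beltran}, Theorem A], which gives $T\cong\mathrm{PSL}_2(p^{2^a})$ with $p$ odd (and $p\equiv\pm1\pmod 8$ if $a=0$) in one stroke. Since then $|\mathrm{Out}(T)|=2^{a+1}$, only the three index-$2$ overgroups remain. So $q$ even, odd-order field automorphisms and exponents with an odd prime factor never have to be examined.

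You instead eliminate each configuration by exhibiting an explicit bad subgroup:
\begin{itemize}
\item $N_G(\mathrm{PSL}_2(q^{1/r}))$ for an odd prime $r\mid f$, in $\mathrm{PGL}_2(q)$, in $K\langle\psi\rangle$ with $\psi$ of odd order, and in $K\langle\delta\varphi\rangle$. In the last case the representative $\mathrm{diag}(\omega,1)\varphi^{f/2}$, with $\omega$ a non-square of $\mathbb F_{q^{1/r}}$, normalizes it, since $\omega$ remains a non-square in $\mathbb F_q$ for $r$ odd.
\item $N_G(\mathrm{PGL}_2(\sqrt q))$ in the pure field case.
\item $S_4$ in $\mathrm{PGL}_2(p)$ for $p\equiv\pm3\pmod 8$.
\end{itemize}
This is more self-contained and shows directly why the exponent must be a power of $2$, at the cost of more case work. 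You can even skip maximality checks for these exclusions. Once no maximal subgroup other than $K$ can be simple, every subgroup not contained in $K$ lies in a maximal subgroup $M\neq K$ and so must be $2$-nilpotent; each subgroup above violates this. In both approaches Giudici's tables are genuinely needed only for the converse.

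Three slips need repair.
\begin{itemize}
\item In the $C_G(K)=1$ step, the claim that a dihedral maximal subgroup of order divisible by $4$ is not $2$-nilpotent is false. Dihedral groups are always $2$-nilpotent, as you yourself use later. That branch is in fact vacuous: $C\cap K=1$ and $|G:K|$ prime force $G=K\times C$ with $|C|=\ell$, which is your ``exception''. There, the impossibility of all maximal subgroups of $K$ being $2$-nilpotent is It\^o's theorem on minimal non-$p$-nilpotent groups, not a fact about minimal simple groups.
\item For $q$ even and $G\neq K$ you never say what goes wrong, and your subfield argument fails for $q=2^\ell$: there $\mathrm{PSL}_2(2)\times C_\ell\cong S_3\times C_\ell$ is $2$-nilpotent. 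Use the Borel normalizer instead, which contains the non-$2$-nilpotent group $E_q\rtimes C_{q-1}$.
\item In the pure field case the maximal subgroup is $N_G(\mathrm{PGL}_2(\sqrt q))$, an extension of $\mathrm{PGL}_2(\sqrt q)$ by $C_2$, not $\mathrm{PGL}_2(\sqrt q)$ itself. The conclusion is unaffected.
\end{itemize}
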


As an immediate consequence, we obtain the following result for the \(2\)-decomposable analogue.

\begin{CorF} Every finite group satisfying $(*_{2-2'})$ is solvable.
\end{CorF}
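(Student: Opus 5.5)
We argue by contradiction: assume $G$ is non-solvable and satisfies $(*_{2-2'})$. A $2$-decomposable group is $2$-nilpotent, so $(*_{2-2'})$ implies $(*_2)$. Theorem E then says $G$ is one of the groups in (1)--(3). The plan is to show that each of these has a maximal subgroup that is neither non-abelian simple nor $2$-decomposable. We look for this obstruction among the dihedral-type or Borel maximal subgroups, which are $2$-nilpotent but typically fail to be $2$-decomposable because an involution inverts a normal subgroup of odd order.

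\emph{Case (1), $G=\mathrm{PSL}_2(q)$ with $q=p^f$ odd.} Take the Borel subgroup $B=E_q\rtimes C_{(q-1)/2}$, the stabilizer of a point of the projective line. For $q>3$ it is proper, and it is maximal in $\mathrm{PSL}_2(q)$; the small exceptions $q\in\{5,7,9,11\}$ to the usual lists do not lose the Borel subgroup. Suppose $(q-1)/2$ is even. Then $B$ contains an involution, realised as a diagonal element $\mathrm{diag}(\lambda,\lambda^{-1})$, which acts on the unipotent radical $E_q$ as multiplication by $\lambda^2\neq 1$. So it does not centralize the Hall $2'$-subgroup, and $B$ is not $2$-decomposable. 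Suppose instead $(q-1)/2$ is odd, i.e.\ $q\equiv 3\pmod 4$. Then $(q+1)/2$ is even, and we use the dihedral maximal subgroup $D_{q+1}$. It is maximal for $q\ge 13$ and in the listed small cases; for $q=7$ we use $S_4$ instead, and $q=11$ can be handled directly via $A_5$ or $D_{12}$. The dihedral group $D_{q+1}$ has order $q+1$, with cyclic part of order $(q+1)/2$. Here $(q+1)/2$ has an odd prime divisor unless $q+1$ is a power of $2$. In the dihedral group the involutions invert the cyclic part, so $D_{q+1}$ is not $2$-decomposable unless it is a $2$-group. In that residual case, $q=2^k-1$ is a Mersenne prime, and since $q\ge 7$ we have $q\equiv 7\pmod 8$. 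Then $S_4$ is a maximal subgroup, and it is neither $2$-decomposable nor simple.

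\emph{Cases (2) and (3).} Here $q=p^{2^a}$. When $a\ge 1$, or when $a=0$ with $p\equiv\pm1\pmod 8$, the socle $\mathrm{PSL}_2(q)$ is a maximal subgroup and it is simple, so it causes no problem. Instead we use the normaliser of a Borel subgroup, a maximal subgroup of shape $E_q\rtimes C$ where $C$ has even order. For $\mathrm{PGL}_2(q)$ this is $E_q\rtimes C_{q-1}$, in which a generator of $C_{q-1}$ acts fixed-point-freely on $E_q$. Hence the involutions do not centralize $E_q$, and the subgroup is not $2$-decomposable. For the group in (3), the Borel normaliser is again $E_q\rtimes H$. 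Here $H$ contains the even-order cyclic torus $C_{(q-1)/2}$ of $\mathrm{PSL}_2(q)$, because $q\equiv1\pmod 4$ when $a\ge1$. The involution of that torus acts nontrivially on $E_q$ exactly as in Case (1), so this maximal subgroup is not $2$-decomposable either.

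The main obstacle is Case (1): we must be sure that a suitable non-$2$-decomposable maximal subgroup really exists for every $q$ in the list. In fact the Borel argument alone suffices when $q\equiv1\pmod4$. For $q\equiv3\pmod4$ the argument combines the dihedral subgroup $D_{q+1}$, the subgroup $S_4$, and Dickson's list of maximal subgroups of $\mathrm{PSL}_2(q)$ for the small exceptional values.
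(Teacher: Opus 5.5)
Your argument is correct and has the same skeleton as the paper's: pass from $(*_{2-2'})$ to $(*_2)$, invoke Theorem E, and exhibit in each group a solvable maximal subgroup that is not $2$-decomposable. What differs is the choice of obstruction.

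\textbf{The paper's choice.} It works uniformly with dihedral subgroups.
\begin{itemize}
\item In $\mathrm{PSL}_2(q)$ it takes $D_{q+\varepsilon}$, with $\varepsilon=\pm1$ chosen so that $q+\varepsilon$ has an odd prime divisor; for $q=11$ it uses $D_{12}$.
\item In $\mathrm{PGL}_2(q)$ it takes $D_{2(q+\varepsilon)}$.
\item In $\langle \mathrm{PSL}_2(q),\delta\varphi^{2^{a-1}}\rangle$ it takes $N_G(D_{q+\varepsilon})$. This needs Giudici's Theorem 1.5 and Lemma 2.4 to know that the normalizer is maximal and meets $\mathrm{PSL}_2(q)$ in $D_{q+\varepsilon}$.
\end{itemize}

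\textbf{Your choice.} You use the stabilizer of a point of $\mathbb P^1(\mathbb F_q)$.
\begin{itemize}
\item Its maximality in every almost simple group with socle $\mathrm{PSL}_2(q)$ follows from $2$-transitivity. You should say this explicitly in case (3), where you currently only assert it.
\item In cases (2) and (3), and in $\mathrm{PSL}_2(q)$ with $q\equiv 1\pmod 4$, it contains the involution $z\mapsto -z$, which inverts $E_q$.
\item This requires no information about dihedral normalizers.
\end{itemize}
The price is that for $q\equiv 3\pmod 4$ the Borel subgroup of $\mathrm{PSL}_2(q)$ has odd order $q(q-1)/2$. It is then trivially $2$-decomposable, so there you must fall back on the paper's dihedral argument with $\varepsilon=+1$.

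\textbf{Cosmetic points.}
\begin{itemize}
\item Your residual cases $q=7$ and $q$ a Mersenne prime never occur in Theorem E(1). For $f=1$ one has $p\equiv\pm3\pmod 8$. For odd $f>1$, the odd integer $(p^f+1)/(p+1)>1$ divides $q+1$.
\item That last fact is also the missing justification for your claim that $q+1=2^k$ forces $q$ to be prime.
\item For $q=11$ the obstruction is $D_{12}=D_{q+1}$, which your general argument already covers. It is not $A_5$, which is simple.
\end{itemize}
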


The paper is organized as follows. Section 2 contains the proofs of Theorems A and B. In Section 3, we develop preliminary results concerning permutation groups, affine groups and projective groups, and then apply them to the alternating and sporadic simple groups, proving Theorems C and D. Section 4 is devoted to the case $p=2$, where we prove Theorem E and derive Corollary F.

\section{Proofs of Theorems A and B}

The proofs of Theorems A and B do not require the Classification of Finite Simple Groups. Instead, we appeal to the following result  [\citenum{BeltranShao2023Invariant},  Corollary 4.2]: A finite group in which every maximal subgroup is either normal or $p$-nilpotent is $p$-solvable, provided that $p$ is an odd prime. Its proof is based on the Glauberman-Thompson criterion for $p$-nilpotence.

\begin{proof}[Proof of Theorem A]
If $G$ is $p$-nilpotent, then  $G$ trivially satisfies $(*_p)$. Conversely, we assume  that $G$ satisfies $(*_p)$. If every maximal subgroup of $G$ is $p$-nilpotent, then $G$ itself must be $p$-nilpotent; otherwise $G$ would be minimal non-$p$-nilpotent and hence solvable by It\^o's theorem [\citenum{Huppert}, Theorem IV.5.4], a contradiction. Henceforth, we assume that $G$ possesses a maximal subgroup $M$ that is non-abelian simple. Let $N$ be a proper minimal normal subgroup of $G$, which exists because $G$ is not simple. We  distinguish two cases: whether $N$ is contained in $M$ or not.

\medskip
Case 1. Suppose first  that $N\nleq M$. Since $M$ is simple, we have $N\cap M=1$, and the maximality of $M$ implies that $G=NM$. Hence $G/N\cong M$. Let $L/N$ be a maximal subgroup of $G/N$. Then $L$ is maximal in $G$ and properly contains $N$, so $L$ cannot be non-abelian simple. Thus, by hypothesis, $L$ is $p$-nilpotent. This establishes that every maximal subgroup of $M\cong G/N$ is $p$-nilpotent. Now, if $M$ were not $p$-nilpotent, it would be minimal non-$p$-nilpotent, and again solvable by It\^o's theorem, a contradiction. It follows that $M$ is $p$-nilpotent, and as $M$ is non-abelian simple, we deduce that $p\nmid |M|$.

Let $M_1$ be a maximal subgroup of $G$ containing $N$. If $M_1$ were non-abelian simple, then $N=M_1$, and consequently $G/N$ would have prime order, contradicting $G/N\cong M$. This shows that $M_1$ is $p$-nilpotent. Now, since $G=NM$ and $p\nmid |M|$, it follows that $p$ divides $|N|$. Accordingly, the subgroup $N\leq M_1$ is also $p$-nilpotent. But $N$ is a minimal normal subgroup of $G$,  so it is either elementary abelian or a direct product of isomorphic non-abelian simple groups. However, the fact that \(p\) divides \(|N|\) forces \(N\) to be an elementary abelian \(p\)-group.

Let $q\neq p$ be any prime divisor of $|M|$ and let $Q\in {\rm Syl}_q(M)$. The proper subgroup $NQ$ is contained in some maximal subgroup $H$ of $G$, which cannot be a non-abelian simple group because $N\unlhd H$; so, by hypothesis, it is $p$-nilpotent. In particular, $NQ$ is $p$-nilpotent as well, forcing $NQ= N \times Q$. Since this holds for every Sylow subgroup of $M$, we have $[N,M]=1$,  and then $G=M\times N$.  Because $M$ is maximal in $G$, it follows that  $|G:M|=|N|$ is prime, so $N\cong C_p$. We conclude that $G\cong M\times C_p$, with $M$ being a non-abelian simple $p'$-group.

\medskip
Case 2. Suppose next that $N\leq M$. The fact that $M$ is a non-abelian simple group gives $N=M$,  implying that $G/M\cong C_q$ for some prime $q$ (possibly equal to $p$). Let $L\neq M$ be a maximal subgroup of $G$. If $L$ were non-abelian simple, then $L\cap M\unlhd L$, so $L\cap M=1$ or $L\leq M$. The latter case is impossible, whereas the former yields $L\cong LM/M=G/M\cong C_q$, a contradiction in either case. We deduce that every maximal subgroup of $G$ distinct from $M$ is $p$-nilpotent. Consequently, each such subgroup is either normal or \(p\)-nilpotent. Since $p$ is odd,  applying [\citenum{BeltranShao2023Invariant}, Corollary 4.2]  allows us to deduce that \(G\) is \(p\)-solvable. Now, $M$ is a non-abelian simple composition factor of $G$, so $p\nmid |M|$. But, as $p$ divides $|G|$ and $G/M\cong C_q$, we necessarily obtain $q=p$. This shows
that $G\cong M\rtimes C_p$, where $M$ is a non-abelian simple $p'$-group.

\medskip
In both cases, we have shown that $G$ is $p$-nilpotent, thereby establishing the first equivalence in the statement.  The above paragraphs also show the asserted structure for $G$ when there exists  a non-abelian simple maximal subgroup  in $G$. Conversely,  if $G\cong M\times C_p$ or $G\cong M\rtimes C_p$, where $M$ is a non-abelian simple $p'$-group, then $M$ is clearly a  non-abelian simple maximal subgroup of $G$, and it is unique because, in both cases, $M$ is the normal $p'$-complement of $G$. The proof is complete.
\end{proof}

\begin{proof}[Proof of Theorem B]
If \(G\) is \(p\)-decomposable, it is evident that \(G\) satisfies \((\ast_{p-p'})\).
Conversely, assume that \(G\) satisfies \((\ast_{p-p'})\). Since every \(p\)-decomposable group is \(p\)-nilpotent, \(G\) also satisfies \((\ast_p)\). By Theorem A,  \(G\) is \(p\)-nilpotent. 
Moreover, we claim that every maximal subgroup of \(G\) is \(p\)-decomposable. Indeed, if \(M\) is a maximal subgroup of \(G\) that is non-abelian simple, then, given that $M$ is $p$-nilpotent,  \(M\) must be a $p'$-group; hence, it is trivially \(p\)-decomposable.

\medskip
We now prove that \(G\) itself is \(p\)-decomposable. As \(G\) is \(p\)-nilpotent, we  write \(G=N\rtimes P\), where \(N=O_{p'}(G)\) and \(P\in \operatorname{Syl}_p(G)\). Since \(G\) is non-solvable and \(G/N\) is a \(p\)-group, the subgroup \(N\) must be non-solvable. In particular, \(N\) is not a \(q\)-group for any prime \(q\in\pi(N)\). 
Let \(q\in\pi(N)\), and let \(Q\) be a \(P\)-invariant Sylow \(q\)-subgroup of \(N\), whose existence follows from  standard properties of coprime action (see, for instance, [\citenum{KurzweilStellmacher}, Chapter 8]). As \(Q<N\), we have \(QP<G\), so we can choose a maximal subgroup \(H\) of \(G\) that contains \(QP\). By the preceding paragraph, \(H\) is \(p\)-decomposable, so \(H=O_p(H)\times O_{p'}(H)\). Since \(P\in \operatorname{Syl}_p(H)\) and \(Q\) is a \(p'\)-subgroup of \(H\), we have \(P=O_p(H)\) and \(Q\leq O_{p'}(H)\). In particular,  \([Q,P]=1\). This holds for every \(P\)-invariant Sylow \(q\)-subgroup of \(N\) and for every \(q\in\pi(N)\), yielding \([N,P]=1\). Consequently, \(G=N\times P\), which means that \(G\) is \(p\)-decomposable, as desired. Thus, the first equivalence is proved.

\medskip

Suppose that \(G\) possesses a maximal subgroup \(M\) that is non-abelian simple. Since \(G\) satisfies \((\ast_p)\), Theorem A gives
\(
G\cong M\times C_p
\)
or
\(
G\cong M\rtimes C_p,
\)
where \(M\) is a non-abelian simple \(p'\)-group. As we have proved that \(G\) is \(p\)-decomposable, it follows that \(G\cong M\times C_p\). 
The converse is immediate, and such a maximal subgroup \(M\) is unique because it is the normal \(p'\)-complement of \(G\). The proof is complete.
\end{proof}

\section{Proofs of Theorems C and D}

The proof of Theorem C relies mainly on the classification of maximal subgroups of the alternating groups due to Liebeck, Praeger and Saxl \cite{LPS}, together with the following result on primitive permutation groups containing a cycle with a certain number of fixed points.

\begin{theorem}[{\cite[Theorem 1.2]{Jones}}]\label{thm:jones}
Let $G$ be a primitive permutation group of finite degree $n$, not containing the alternating group $A_n$. Suppose that $G$ contains a cycle fixing $k$ points, where $0 \le k \le n-2$. Then one of the following holds:
\begin{enumerate}[label=\normalfont(\arabic*)]
    \item $k=0$ and either:
    \begin{enumerate}[label=\normalfont(\alph*)]
        \item $C_p \le G \le \mathrm{AGL}_1(p)$ with $n=p$ prime, or
        \item $\mathrm{PGL}_d(q) \le G \le \mathrm{P\Gamma L}_d(q)$ with $n=(q^d-1)/(q-1)$ and $d\ge 2$ for some prime power $q$, or
        \item $G=\mathrm{PSL}_2(11)$, $M_{11}$ or $M_{23}$ with $n=11$, $11$ or $23$ respectively.
    \end{enumerate}
    
    \item $k=1$ and either:
    \begin{enumerate}[label=\normalfont(\alph*)]
        \item $\mathrm{AGL}_d(q) \le G \le \mathrm{A\Gamma L}_d(q)$ with $n=q^d$ and $d\ge 1$ for some prime power $q$, or
        \item $G=\mathrm{PSL}_2(p)$ or $\mathrm{PGL}_2(p)$ with $n=p+1$ for some prime $p\ge 5$, or
        \item $G=M_{11}$, $M_{12}$ or $M_{24}$ with $n=12$, $12$ or $24$ respectively.
    \end{enumerate}
    
    \item $k=2$ and $\mathrm{PGL}_2(q) \le G \le \mathrm{P\Gamma L}_2(q)$ with $n=q+1$ for some prime power $q$.
    
\end{enumerate}
\end{theorem}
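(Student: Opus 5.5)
This statement is quoted from \cite{Jones}, and the paper only uses it. If I had to reprove it, I would follow the classical route: first use the cycle to get a high degree of transitivity, then read off the answer from the classification of multiply transitive groups. That classification depends on the Classification of Finite Simple Groups.

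Let $g\in G$ be a cycle of length $m=n-k$, let $\Delta$ be its set of $k$ fixed points, and let $\Gamma$ be its support. The subgroup $\langle g\rangle$ lies in the pointwise stabilizer $G_{(\Delta)}$ and acts regularly, in particular transitively, on $\Gamma$. So $\Gamma$ is a Jordan set of $G$ with complement of size $k$. By Jordan's theorem on Jordan sets (Wielandt, Theorem 13.1), a primitive group with a Jordan set whose complement has $k$ points is $(k+1)$-transitive. Hence $G$ is $(k+1)$-transitive and $G_{(\Delta)}$ contains a regular cyclic subgroup on the remaining $n-k$ points. I would then split by the value of $k$.

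\textbf{Case $k\ge 3$.} Here $G$ is $4$-transitive. By the classification, the only $4$-transitive groups not containing $A_n$ are $M_{11},M_{12},M_{23},M_{24}$. A direct check of their cycle types shows that none of them contains a cycle fixing at least $3$ points. So no group arises, which matches the statement.

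\textbf{Case $k=2$.} Here $G$ is $3$-transitive and the two-point stabilizer $G_{\alpha\beta}$ is transitive, via a cycle, on the other $n-2$ points. I would go through the list of $3$-transitive groups: $\mathrm{AGL}_d(2)$, $2^4\rtimes A_7$, the groups $\mathrm{PSL}_2(q)\le G\le\mathrm{P\Gamma L}_2(q)$, and the Mathieu groups. The affine and Mathieu cases are excluded by element orders and cycle types. For $\mathrm{P\Gamma L}_2(q)$, an element of order $q-1$ fixing $0$ and $\infty$ is an $(n-2)$-cycle. It lies in $\mathrm{PGL}_2(q)$, and no proper overgroup of $\mathrm{PSL}_2(q)$ avoiding $\mathrm{PGL}_2(q)$ contains it. This yields $\mathrm{PGL}_2(q)\le G$.

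\textbf{Case $k=1$.} Here $G$ is $2$-transitive and $G_\alpha$ contains an $(n-1)$-cycle. I would go through the $2$-transitive list of affine and almost simple groups.
\begin{itemize}
\item In the affine case $n=q^d$, a regular cyclic subgroup of $G_0$ on the nonzero vectors is a Singer cycle. A Singer cycle together with the translations forces $\mathrm{AGL}_d(q)\le G$.
\item In the almost simple case, the orders of the point stabilizers leave only $\mathrm{PSL}_2(p)$ and $\mathrm{PGL}_2(p)$ of degree $p+1$, together with $M_{11},M_{12}$ of degree $12$ and $M_{24}$ of degree $24$.
\end{itemize}

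\textbf{Case $k=0$.} I expect this to be the main obstacle, since no transitivity beyond primitivity comes for free. Here $G$ has a regular cyclic subgroup. By the Burnside--Schur theorem, either $n=p$ is prime and $G\le\mathrm{AGL}_1(p)$, or $G$ is $2$-transitive. In the $2$-transitive case I would go through the list again, asking which groups contain an $n$-cycle. For this I would follow Feit's and Jones's analysis of $2$-transitive groups with a regular cyclic subgroup. The projective groups $\mathrm{PGL}_d(q)\le G\le\mathrm{P\Gamma L}_d(q)$ arise through Singer cycles. The sporadic exceptions are $\mathrm{PSL}_2(11)$ of degree $11$ and $M_{11},M_{23}$. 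The affine groups survive only when $n=p$, which is already covered.

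The delicate points are the completeness of the $k=0$ list, and in the $k=1$ and $k=2$ cases showing that the cycle forces the full $\mathrm{AGL}$ or $\mathrm{PGL}$ rather than a smaller subgroup. I would handle the latter by computing the order and determinant of the relevant Singer-type elements.
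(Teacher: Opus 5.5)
The paper gives no proof of this statement (it is quoted from Jones), so your sketch has to stand on its own. The step that fails is the reduction to $(k+1)$-transitivity. Jordan's theorem requires the pointwise stabilizer of the fixed set to act \emph{primitively} on the support. Your regular group $\langle g\rangle$ is primitive on its $m$ points only when $m$ is prime.

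With mere transitivity, the statement you attribute to Jordan is false. Take $\mathrm{GL}_3(2)$ acting primitively on the $7$ nonzero vectors of $\mathbb{F}_2^3$. The pointwise stabilizer of a $2$-dimensional subspace $U$ is a Klein four-group acting regularly on the $4$ vectors outside $U$. So there is a Jordan set whose complement has $3$ points. Yet the group is not even $3$-transitive, since it preserves the lines of the Fano plane. What the Jordan-set argument (unions of overlapping Jordan sets, together with primitivity) does give is $2$-transitivity whenever $k\ge 1$.

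That is enough for $k=1$. For $k=2$ you still get $3$-transitivity: $G$ is $2$-transitive, and $g\in G_{\alpha\beta}$ is transitive on the remaining $n-2$ points. For $k\ge 3$, however, your case collapses, because nothing you have established places $G$ among the $4$-transitive groups. You would need one of two things:
\begin{enumerate}
\item an argument that uses the cyclicity of $\langle g\rangle$, not just its transitivity on the support, to climb from $2$-transitivity to $(k+1)$-transitivity; or
\item a run through the whole list of affine and almost simple $2$-transitive groups, showing that none other than $A_n$ and $S_n$ contains a cycle with at least three fixed points.
\end{enumerate}
That is where the real work for $k\ge 3$ lies.

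There is also a smaller omission in the affine part of the case $k=1$. Once $V$ is identified with $\mathbb{F}_n$ via the Singer cycle, the lower bound $\mathrm{AGL}_1(n)\le G$ is automatic. But the statement needs $\mathrm{AGL}_d(q)\le G\le \mathrm{A\Gamma L}_d(q)$ for the \emph{same} $d$ and $q$. Obtaining that upper bound requires Kantor's classification of linear groups containing a Singer cycle, which your sketch does not invoke.
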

We begin with two elementary observations on permutation groups that will be needed in the proof of Theorem C.

\begin{lemma}\label{lem:intransitive}
Let $G$ be a maximal subgroup of $A_n$, with $n=p+1$ and $p$ an odd prime. Suppose that $G$ is not transitive and contains a $p$-cycle. Then $G$ is the stabilizer of one point in $A_n$. In particular, $G\cong A_p$.
\end{lemma}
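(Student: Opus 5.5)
The plan is to study the orbit structure of $G$ on $\{1,\dots,n\}$, where $n=p+1$. Let $\sigma\in G$ be a $p$-cycle. It moves $p$ points and fixes exactly one point, say $x$. Every orbit of $G$ is a union of $\langle\sigma\rangle$-orbits, and these are the single $p$-point support of $\sigma$ together with $\{x\}$. Since $G$ is not transitive, its orbits must therefore be exactly $\Omega\setminus\{x\}$ (of size $p$) and $\{x\}$. So $G$ fixes $x$, that is, $G\le (A_n)_x$.

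Next, $(A_n)_x\cong A_{n-1}=A_p$ is a proper subgroup of $A_n$. By maximality of $G$ in $A_n$ we get $G=(A_n)_x$, the full point stabilizer, and hence $G\cong A_p$.

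There is no real obstacle here. The only point to check is that the fixed point of $\sigma$ is unique, which holds because the cycle has length $p=n-1$. Transitivity fails only in this single way, so $G$ is exactly the point stabilizer.
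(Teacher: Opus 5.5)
Your proposal is correct and follows essentially the same argument as the paper. The support of the $p$-cycle lies in a single $G$-orbit, so intransitivity forces the remaining point to be fixed by $G$, and maximality of $G$ then gives equality with the point stabilizer $(A_n)_x\cong A_p$.
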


\begin{proof}
Let $\Omega=\{1,\ldots,p+1\}$. A $p$-cycle contained in $G$  moves exactly $p$ points and fixes exactly one point. These $p$ moved points lie in the same orbit of $G$. The fact that $G$ is not transitive implies that the remaining point must belong to an orbit of length $1$. Hence $G$ is contained in the stabilizer of this point in $A_n$, which is isomorphic to $A_p$. By maximality of $G$ and $A_p$ in $A_{p+1}$, the equality follows. 
\end{proof}

\begin{lemma}\label{lem:primitive}
Let $G$ be a transitive subgroup of $A_n$, with $n=p+1$ and $p$ an odd prime. Suppose that $G$ contains a $p$-cycle. Then $G$ is primitive.
\end{lemma}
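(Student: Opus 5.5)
The plan is to show that every block system preserved by $G$ is trivial. Since $G$ is transitive, it suffices to check the two extreme block sizes. Let $\Omega=\{1,\ldots,p+1\}$ and let $\mathcal{B}$ be a system of blocks of imprimitivity for $G$ on $\Omega$. All blocks have a common size $b$ with $b\mid n=p+1$, and I will show that $b=1$ or $b=p+1$.

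First I use the $p$-cycle $x\in G$. It fixes exactly one point, say $\omega_0$, and permutes the other $p$ points transitively. Let $B\in\mathcal{B}$ be the block containing $\omega_0$. Since $x$ fixes $\omega_0$, it maps $B$ to a block containing $\omega_0$, so $x$ stabilizes $B$. Consequently $B\setminus\{\omega_0\}$ is a union of $\langle x\rangle$-orbits on $\Omega\setminus\{\omega_0\}$. But $\Omega\setminus\{\omega_0\}$ is a single orbit of length $p$, so either $B=\{\omega_0\}$ or $B=\Omega$.

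It remains to combine this with the fact that all blocks have the same size. In the first case $b=1$. In the second case $b=p+1$. Either way the block system is trivial, so $G$ is primitive.

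There is essentially no obstacle here. The only point needing care is the observation that the block containing the unique fixed point of $x$ is $x$-invariant. Oddness of $p$ is not needed beyond guaranteeing that $p$-cycles lie in $A_n$, and $n=p+1\ge 4$ ensures $\Omega$ is nontrivial.
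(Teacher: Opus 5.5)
Your proof is correct and follows the paper's argument: the block containing the unique fixed point of the $p$-cycle is invariant under that cycle, so it is a union of $\langle x\rangle$-orbits and must have size $1$ or $p+1$. The remark that $b\mid p+1$ is true but not needed.
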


\begin{proof}
Let $\Omega=\{1,\ldots,p+1\}$ and assume without loss of generality that $\sigma=(1,\ldots,p)\in G$. Suppose that $G$ is imprimitive, and let $\mathcal{B}$ be a non-trivial system of blocks. Let $B\in\mathcal{B}$ be the block that contains the fixed point of $\sigma$. Since $\sigma$ fixes this point, we have $B^\sigma=B$. Hence $B$ must have size $1$ or $p+1$, which contradicts the assumption that  $\mathcal{B}$ is a non-trivial system. We conclude that $G$ is primitive.
\end{proof}

We also need two elementary facts concerning the affine and projective linear actions. Although they are well known, we include their proofs for the reader's convenience.

\begin{lemma}\label{lem:affine-intersection}
Let \(p\) be an odd prime. Under
the natural action of  \(\mathrm{AGL}_1(p)\) on \(\mathbb F_p\), we have
\[
\mathrm{AGL}_1(p)\cap A_p\cong C_p\rtimes C_{(p-1)/2}.
\]
\end{lemma}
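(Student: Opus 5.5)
We identify $\mathrm{AGL}_1(p)$ with the group of affine maps $x\mapsto ax+b$ on $\mathbb F_p$, with $a\in\mathbb F_p^\times$ and $b\in\mathbb F_p$. In this picture $\mathrm{AGL}_1(p)=T\rtimes D$, where $T=\{x\mapsto x+b\}\cong C_p$ is the translation subgroup and $D=\{x\mapsto ax\}\cong \mathbb F_p^\times\cong C_{p-1}$ is the stabilizer of $0$. We will determine the sign of each element as a permutation of the $p$ points of $\mathbb F_p$. It is enough to do this for generators of $T$ and $D$, because the sign map is a homomorphism.

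First, every nontrivial translation $x\mapsto x+b$ is a $p$-cycle. A cycle of odd length is an even permutation, so $T\le A_p$. Next, fix a generator $g$ of $\mathbb F_p^\times$ and consider the multiplication $\mu_g:x\mapsto gx$. It fixes $0$ and permutes $\mathbb F_p^\times$ transitively, so it acts as a single $(p-1)$-cycle. Since $p-1$ is even, this cycle is an odd permutation.

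Consequently the sign homomorphism $\mathrm{sgn}:\mathrm{AGL}_1(p)\to\{\pm1\}$ is trivial on $T$ and sends $\mu_g$ to $-1$. In particular it is surjective, so its kernel $\mathrm{AGL}_1(p)\cap A_p$ has index $2$. This kernel contains $T$, and it meets $D$ in the index-$2$ subgroup $\langle \mu_{g^2}\rangle\cong C_{(p-1)/2}$, which is the group of multiplications by nonzero squares. Therefore
\[
\mathrm{AGL}_1(p)\cap A_p = T\rtimes\langle\mu_{g^2}\rangle\cong C_p\rtimes C_{(p-1)/2}.
\]
To confirm this is the whole kernel, note that $|T\rtimes\langle \mu_{g^2}\rangle| = p(p-1)/2$, which is exactly half of $|\mathrm{AGL}_1(p)|=p(p-1)$.

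The only step that needs care is the cycle-type computation for $\mu_g$, namely that multiplication by a primitive root is a single $(p-1)$-cycle on $\mathbb F_p^\times$. This holds because $\mathbb F_p^\times$ is cyclic of order $p-1$ and generated by $g$. Everything else is bookkeeping with the sign homomorphism.
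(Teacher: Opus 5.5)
Your proof is correct and follows essentially the same route as the paper: decompose $\mathrm{AGL}_1(p)=T\rtimes D$, note that translations are $p$-cycles and hence even, and that multiplication by a primitive root is an odd $(p-1)$-cycle. The only cosmetic difference is that you finish with the sign homomorphism and an order count, whereas the paper checks directly that multiplications by squares are even (as squares of permutations) and uses $T\le A_p$ to write the intersection as $T\rtimes(M\cap A_p)$.
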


\begin{proof}
We identify \(\mathrm{AGL}_1(p)\) with the group of affine maps
\[
x\mapsto ax+b,\qquad a\in\mathbb F_p^\times,\ b\in\mathbb F_p.
\]
Recall that \(\mathrm{AGL}_1(p)=T\rtimes M\), where
\[
T=\{x\mapsto x+b:b\in\mathbb F_p\}\cong C_p
\quad\text{and}\quad
M=\{x\mapsto ax:a\in\mathbb F_p^\times\}\cong C_{p-1}.
\]
Every non-trivial element of \(T\) acts as a \(p\)-cycle, and hence is an even permutation, since \(p\) is odd. Thus, \(T\leq A_p\). 
Now, if \(a=u^2\) is a square in \(\mathbb F_p^\times\), then the permutation \(x\mapsto ax\) is the square of the permutation \(x\mapsto ux\), and, as a consequence, it is even. Therefore, the subgroup of squares of \(\mathbb F_p^\times\), which has order \((p-1)/2\), is contained in \(M\cap A_p\). On the other hand, if \(g\) is a generator of \(\mathbb F_p^\times\), then the permutation \(x\mapsto gx\) fixes \(0\) and acts on \(\mathbb F_p^\times\) as the cycle
\(
(1,\ g,\ g^2,\ \cdots\ ,g^{p-2}),
\)
which has length \(p-1\). Since \(p-1\) is even, this is an odd permutation. We deduce that \(M\) is not contained in \(A_p\). Moreover, since the subgroup of squares has index \(2\) in \(M\), it follows that \(M\cap A_p\cong C_{(p-1)/2}\). We conclude that
\[
\mathrm{AGL}_1(p)\cap A_p
= T\rtimes (M\cap A_p)
\cong C_p\rtimes C_{(p-1)/2}.
\]
\end{proof}

Before stating the next lemma,  recall the natural action of \(\mathrm{PGL}_2(q)\) on the projective line. The projective line \(\mathbb P^1(\mathbb F_q)\) is the set of one-dimensional subspaces of \(\mathbb F_q^2\), consisting of \(q+1\) points. Each non-zero vector \((x,y)\in\mathbb F_q^2\) determines the point \(\langle(x,y)\rangle=\{\lambda(x,y):\lambda \in \mathbb{F}_q \}\). If \(y\neq0\), this point is represented by \(x/y\in\mathbb F_q\), whereas the pairs with \(y=0\) correspond to the point at infinity, denoted by \(\infty\). Thus, we identify
\[
\mathbb P^1(\mathbb F_q)=\mathbb F_q\cup\{\infty\}.
\]
Under this identification, an element of \(\mathrm{PGL}_2(q)\), represented by a matrix
\[
\begin{pmatrix}
a & b\\
c & d
\end{pmatrix}\in\mathrm{GL}_2(q)
\]
acts by the fractional linear transformation:
\[
z\longmapsto \frac{az+b}{cz+d}.
\]
Here, the usual conventions are used: if \(cz+d=0\), then the image of \(z\) is \(\infty\), and
\[
\infty\longmapsto
\begin{cases}
a/c, & \text{if } c\neq 0,\\
\infty, & \text{if } c=0.
\end{cases}
\]
Therefore, we may write
\[
\mathrm{PGL}_2(q)=
\left\{
z\mapsto \frac{az+b}{cz+d}\ :\ a,b,c,d\in\mathbb F_q,\ ad-bc\neq 0
\right\},
\]
where two matrices differing by a non-zero scalar induce the same transformation.

\begin{lemma}\label{lem:pgl2-not-in-alt}
Let \(p\) be an odd prime. Under its natural action on the projective line
\(\mathbb P^1(\mathbb F_p)\), the group \(\mathrm{PGL}_2(p)\) is not contained in
\(A_{p+1}\).
\end{lemma}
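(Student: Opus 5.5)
It suffices to exhibit a single element of \(\mathrm{PGL}_2(p)\) that acts as an odd permutation on \(\mathbb P^1(\mathbb F_p)=\mathbb F_p\cup\{\infty\}\). The natural candidate is the same kind of element used in the proof of Lemma~\ref{lem:affine-intersection}: the diagonal transformation \(z\mapsto gz\), where \(g\) is a generator of \(\mathbb F_p^\times\). It is represented by the matrix \(\begin{pmatrix} g & 0\\ 0 & 1\end{pmatrix}\in\mathrm{GL}_2(p)\), so it lies in \(\mathrm{PGL}_2(p)\).

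First I compute its cycle structure on the projective line. By the conventions recalled above, \(\infty\mapsto\infty\) because \(c=0\), and \(0\mapsto 0\). On \(\mathbb F_p^\times\) the map acts as the single cycle \((1,g,g^2,\ldots,g^{p-2})\), since \(g\) has multiplicative order \(p-1\). The permutation is therefore one \((p-1)\)-cycle together with two fixed points. Because \(p\) is odd, \(p-1\) is even, and a cycle of even length is an odd permutation. Hence this element does not lie in \(A_{p+1}\), and \(\mathrm{PGL}_2(p)\not\le A_{p+1}\).

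There is no real obstacle here. The only point needing care is that the element is well defined in \(\mathrm{PGL}_2(p)\) and really fixes \(\infty\), which follows directly from the fractional linear action described before the statement.

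As an alternative check, one could use the inversion \(z\mapsto -1/z\) or count orbits of an involution. The diagonal element, however, mirrors Lemma~\ref{lem:affine-intersection} and is the cleanest argument.
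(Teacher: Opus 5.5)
Your argument is correct and is the paper's proof: $z\mapsto gz$, with $g$ a generator of $\mathbb F_p^\times$, fixes $0$ and $\infty$ and acts as a single $(p-1)$-cycle on $\mathbb F_p^\times$, so it is an odd permutation. Your closing aside does not work, though: $z\mapsto -1/z$ comes from a matrix of determinant $1$, so it lies in $\mathrm{PSL}_2(p)\le A_{p+1}$ and is always even, and you should drop that alternative.
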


\begin{proof}
We identify \(\mathbb P^1(\mathbb F_p)\) with \(\mathbb F_p\cup\{\infty\}\). Let \(a\) be a generator of the cyclic group \(\mathbb F_p^\times\). The projective transformation \(x\mapsto ax\) belongs to \(\mathrm{PGL}_2(p)\), since it is induced by the matrix
\[
\begin{pmatrix}
a & 0\\
0 & 1
\end{pmatrix}.
\]
It fixes \(0\) and \(\infty\), and acts as a single cycle of length \(p-1\) on \(\mathbb F_p^\times\). Then, as a permutation of the \(p+1\) points of \(\mathbb P^1(\mathbb F_p)\), it has sign
\(
(-1)^{p-2}=-1
\)
because \(p-1\) is even. Then \(\mathrm{PGL}_2(p)\) contains an odd permutation and, thereby, \(\mathrm{PGL}_2(p)\nleq A_{p+1}\).
\end{proof}

We are ready to prove Theorem C.

\begin{proof}[Proof of Theorem C]
Since every \(p\)-decomposable group is \(p\)-nilpotent, property \((\ast_{p-p'})\) trivially entails \((\ast_p)\). Thus, to prove the theorem, we first determine when \(A_n\) satisfies \((\ast_p)\), and then prove  that, in these cases, every maximal subgroup of \(A_n\) is either non-abelian simple or has order not divisible by \(p\). As a consequence, \(A_n\) will satisfy \((\ast_{p-p'})\) as well. Therefore, for alternating groups, both conditions will be equivalent in the cases we list below. We divide the proof into three cases:

\medskip
Case 1. We show that $A_n$ does not satisfy the property $(\ast_p)$ whenever $p\leq n-2$. Assume first that $n\geq 7$ and consider the subgroup
\[
H=(S_2\times S_{n-2})\cap A_n.
\]
This is a maximal intransitive subgroup of $A_n$ by \cite[Theorem, case (a)]{LPS}, and furthermore, it is well known that $H\cong S_{n-2}$. This subgroup is clearly neither non-abelian simple nor \(p\)-nilpotent for any prime $p\leq n-2$.  Hence, $A_n$ does not satisfy the property $(\ast_p)$.

In this case, it remains to consider the values $n=5,6$. If $n=5$, then $p=2$ or $p=3$. When $p=2$, the maximal subgroup $A_4$ of $A_5$ is neither simple nor $2$-nilpotent. The same occurs with $p=3$ and the maximal subgroup $S_3$ of $A_5$. If $n=6$, then again  $p=2$ or $p=3$, and the maximal subgroup $(S_2\times S_4)\cap A_6\cong S_4$ is neither simple nor $p$-nilpotent.  As a result, we conclude that $A_n$ does not satisfy the property $(\ast_p)$ in these cases either.

\medskip
Case 2. We consider the case $p=n$. Let $H$ be the subgroup
\[
H=\mathrm{AGL}_1(p)\cap A_p.
\]
By Lemma~\ref{lem:affine-intersection}, we have \(H\cong C_p\rtimes C_{(p-1)/2}\) and by \cite[Theorem, case (c), and Table I]{LPS}, this affine subgroup is maximal in \(A_p\), except when \(p\in\{7,11,17,23\}\). If \(p\notin\{7,11,17,23\}\), then \(H\) is neither non-abelian simple nor \(p\)-nilpotent. Therefore, \(A_p\) does not satisfy the property \((\ast_p)\).

We are left with the exceptional primes $p=7,11,17,23$. For $p=7$, it is known from \cite{Atlas} that the maximal subgroups of $A_7$ whose orders are divisible by $7$ belong to two classes of subgroups isomorphic to the simple group $\mathrm{PSL}_2(7)$. The remaining maximal subgroups are $7'$-groups, which are trivially $7$-nilpotent. We obtain that $A_7$ satisfies property $(\ast_7)$.

With regard to $p=11$,  for example,  \cite{Atlas} shows that the only maximal subgroups of $A_{11}$ whose orders are divisible by $11$ belong to two classes of subgroups, which are isomorphic to  the simple group $M_{11}$. The remaining maximal subgroups are $11'$-groups, whence they are $11$-nilpotent. Therefore, $A_{11}$ satisfies the property $(\ast_{11})$.

For \(p=17\), it is not difficult to see that  \(\mathrm{PSL}_2(16):C_4\) is a maximal subgroup of \(A_{17}\). This can be checked, for instance, with \cite{GAP4} since this is one of the primitive subgroups of \(A_{17}\) of degree \(17\). This subgroup is neither simple nor \(17\)-nilpotent. As a result, \(A_{17}\) does not satisfy property \((\ast_{17})\).

Finally, for $p=23$, let $M$ be any maximal subgroup of $A_{23}$ of order divisible by $23$. Since $M$ contains a $23$-cycle,  $M$ is transitive and, furthermore, since its degree is prime, $M$ is primitive. We can then apply Theorem \ref{thm:jones}(1) and distinguish among three possibilities: the affine case (a), the projective case (b), and the Mathieu group  $M_{23}$ (c). The projective case cannot occur, since there are no prime powers $q$ and integers $d\geq 2$ satisfying
\[
23 = \frac{q^d-1}{q-1}.
\]
 On the other hand, case (a) forces \(M\cong \mathrm{AGL}_1(23)\cap A_{23}\cong C_{23}\rtimes C_{11}\), according to Lemma~\ref{lem:affine-intersection}, but this subgroup is not maximal in \(A_{23}\), because the exceptional containment in \cite[Table I]{LPS} asserts \(C_{23}\rtimes C_{11}<M_{23}<A_{23}\). In view of this,  the only maximal subgroups of $A_{23}$ whose order is divisible by $23$ are isomorphic to $M_{23}$. Consequently, $A_{23}$ satisfies property $(\ast_{23})$.

\medskip
Case 3. Finally, we study the case $p=n-1$.  Let $M$ be a maximal subgroup of $A_{p+1}$. If $p\nmid |M|$, then $M$ is trivially $p$-nilpotent. Henceforth, we assume that $p$ divides $|M|$, and choose an element of order $p$, which is necessarily a $p$-cycle. Suppose first that $M$ is intransitive. Then Lemma \ref{lem:intransitive} implies that $M\cong A_p$, which is simple and maximal in $A_{p+1} $ for $p\geq 5$. Assume then that $M$ is transitive. By Lemma \ref{lem:primitive}, $M$ is primitive. Applying Theorem \ref{thm:jones}(2) we see that the possible cases are: affine groups; $\mathrm{PSL}_2(p)$ or $\mathrm{PGL}_2(p)$ under their natural action of degree $p+1$; and Mathieu groups in exceptional degrees. We analyze this situation by distinguishing two cases depending on whether $n=p+1$ is a prime power.

If \(p+1\) is not a prime power, the affine case (a) does not occur. On the other hand, by Lemma~\ref{lem:pgl2-not-in-alt}, \(\mathrm{PGL}_2(p)\) is not contained in \(A_{p+1}\), whereas \(\mathrm{PSL}_2(p)\) and the relevant Mathieu groups are non-abelian simple. This allows us to ensure that \(A_{p+1}\) satisfies the property \((\ast_p)\) when \(p=n-1\) and \(n\) is not a prime power.

Suppose finally that $p+1$ is a prime power. As $p$ is an odd prime,  $p+1$ is even, implying that $p+1=2^d$, that is, $p=2^d-1$. Moreover, since $n=p+1\geq 5$, we have $d\geq 3$. Then the affine group
\[
\mathrm{AGL}_d(2)=C_2^d\rtimes \mathrm{GL}_d(2)
\]
is a maximal subgroup of $A_{2^d}$ by \cite{LPS}. This subgroup is evidently not simple. We claim that it is not $p$-nilpotent either. Indeed, if it were $p$-nilpotent, then its quotient  $\mathrm{GL}_d(2)$ would also be $p$-nilpotent. However, $\mathrm{GL}_d(2)=\mathrm{PSL}_d(2)$ is non-abelian simple for $d\geq 3$, while  $p=2^d-1$ divides 
\[
|\mathrm{GL}_d(2)|=2^{d(d-1)/2}(2^d-1)(2^{d-1}-1)\cdots(2^2-1),
\]
leading to a contradiction. The proof is now complete.
\end{proof}

\begin{proof}[Proof of Theorem D]
To investigate the structure of the sporadic simple groups, we will mainly rely on \cite{Atlas}. However, for \(\mathrm{Fi}_{22}\), \(\mathrm{Fi}_{23}\), \(\mathrm{Fi}_{24}'\), \(J_4\), \(Th\), \(B\) and \(M\), where the information in \cite{Atlas} is incomplete or has been refined or updated, we consult  the subsequent corresponding classifications  given in \cite{KleidmanWilsonFi22,KleidmanWilsonFi22Corr,KleidmanFi23,LintonFi24,KleidmanJ4,LintonTh,WilsonB,DietrichLeePopielMonster}, adopting the notation of each respective source. 
We first discuss, from Cases 1 to 19, those sporadic groups whose maximal subgroups can be obtained from \cite{Atlas}. However, for \(\mathrm{Fi}_{22}\) (Case 17),  an additional reference is explicitly indicated.

\smallskip

\noindent\((1)\) \(G\cong M_{11}\). We observe that \(S_5\) is a maximal subgroup that fails to be non-abelian simple or \(p\)-nilpotent for \(p\in\{2,3,5\}\). For \(p=11\), the only maximal subgroups of \(M_{11}\) whose orders are divisible by \(11\) are isomorphic to \(L_2(11)\) which is non-abelian simple. Therefore, \(M_{11}\) satisfies both \((\ast_{11})\) and \((\ast_{11-11'})\), and this occurs only for \(p=11\).

\smallskip

\noindent\((2)\) \(G\cong M_{12}\). The maximal subgroup \(2\times S_5\) is neither non-abelian simple nor \(p\)-nilpotent for \(p\in\{2,3,5\}\). For \(p=11\), the only maximal subgroups whose orders are divisible by \(11\) are those isomorphic to \(M_{11}\) and \(L_2(11)\), both of which are non-abelian simple. This establishes that \(M_{12}\) satisfies both \((\ast_{11})\) and \((\ast_{11-11'})\), and this is true
only for \(p=11\).

\smallskip

\noindent\((3)\) \(G\cong J_1\). The maximal subgroup \(2\times A_5\) is neither non-abelian simple nor \(p\)-nilpotent for \(p\in\{2,3,5\}\). For $p= 7, 11, 19$, the maximal subgroups \(7:6\), \(11:10\) and \(19:6\), respectively, are not $p$-nilpotent nor simple. Thus, \(J_1\) satisfies neither \((\ast_p)\) nor \((\ast_{p-p'})\) for any prime \(p\in\pi(J_1)\).

\smallskip

\noindent\((4)\) \(G\cong M_{22}\). The maximal subgroup \(2^4:S_5\) is neither non-abelian simple nor \(p\)-nilpotent for \(p\in\{2,3,5\}\), while \(2^3:L_3(2)\) fails the condition for \(p=7\). There is only one class of maximal subgroups of order divisible by   \(11\), which corresponds to the simple group \(L_2(11)\). We find that \(M_{22}\) satisfies both \((\ast_{11})\) and \((\ast_{11-11'})\), a fact that occurs  only for \(p=11\).

\smallskip

\noindent\((5)\) \(G\cong J_2\). The maximal subgroup \(A_4\times A_5\) fails to be simple or \(p\)-nilpotent for \(p\in\{2,3,5\}\). For $p=7$,  the maximal subgroup \(L_3(2):2\) is neither non-abelian simple nor \(7\)-nilpotent. This means that \(J_2\) satisfies neither \((\ast_p)\) nor \((\ast_{p-p'})\) for any prime \(p\in\pi(J_2)\).

\smallskip

\noindent\((6)\) \(G\cong M_{23}\). The maximal subgroup \(2^4:A_7\) is neither non-abelian simple nor \(p\)-nilpotent for \(p\in\{2,3,5,7\}\). In addition, \(23:11\) violates the condition for \(p=23\). For \(p=11\), the maximal subgroups whose orders are divisible by \(11\) are isomorphic to \(M_{22}\), \(M_{11}\), and \(23:11\). Now, since the Mathieu groups are simple and \(23:11\) is \(11\)-nilpotent, it follows that \(M_{23}\) satisfies \((\ast_p)\) precisely for \(p=11\). However, because \(23:11\) is not \(11\)-decomposable, we can assert that \(M_{23}\) satisfies \((\ast_{p-p'})\) for no prime \(p\in\pi(M_{23})\).

\smallskip

\noindent\((7)\) \(G\cong HS\). The maximal subgroup \(5:4\times A_5\) is neither non-abelian simple nor \(p\)-nilpotent for \(p\in\{2,3,5\}\), while \(S_8\) is neither \(7\)-nilpotent nor simple. On the other hand,   \(M_{22}\) and \(M_{11}\) are the only (classes of)  maximal subgroups with order divisible by \(11\). We conclude that \(HS\) satisfies both \((\ast_{11})\) and \((\ast_{11-11'})\), and this is  unique to \(p=11\).

\smallskip

\noindent\((8)\) \(G\cong J_3\). The maximal subgroups isomorphic to  \(L_2(16):2\) are neither non-abelian simple nor \(p\)-nilpotent for \(p\in\{2,3,5,17\}\). For \(p=19\), the maximal subgroups whose order is divisible by \(19\) are exactly those lying in two classes of subgroups isomorphic to the simple group \(L_2(19)\). This implies that \(J_3\) satisfies both \((\ast_{19})\) and \((\ast_{19-19'})\), and no other prime $p$ has this property.

\smallskip

\noindent\((9)\) \(G\cong M_{24}\). The maximal subgroup \(M_{22}:2\)  fails to be non-abelian simple or \(p\)-nilpotent for \(p\in\{2,3,5,7,11\}\). Moreover, we find that the only maximal subgroups whose order is divisible by \(23\) are those isomorphic to \(M_{23}\) and \(L_2(23)\), both of which are  simple. This shows that \(M_{24}\) satisfies both \((\ast_{23})\) and \((\ast_{23-23'})\), a feature that is valid only when \(p=23\).

\smallskip

\noindent\((10)\) \(G\cong McL\). The maximal subgroup \(2^4:A_7\) is neither non-abelian simple nor \(p\)-nilpotent for \(p\in\{2,3,5,7\}\). Furthermore, the only maximal subgroups whose orders are divisible by \(11\) lie in two classes of subgroups isomorphic to \(M_{22}\) and \(M_{11}\), respectively, all being simple. Consequently, \(McL\) satisfies both \((\ast_{11})\) and \((\ast_{11-11'})\), which occurs only when \(p=11\).

\smallskip

\noindent\((11)\) \(G\cong He\). The maximal subgroup \(S_4(4):2\) lacks both conditions for \(p\in\{2,3,\allowbreak 5,17\}\), while  for \(p=7\), we may consider \(7:3 \times 
L_3(2)\). Hence \(He\) does not satisfy either \((\ast_p)\) or \((\ast_{p-p'})\) for any prime \(p\in\pi(He)\).

\smallskip

\noindent\((12)\) \(G\cong Ru\). The maximal subgroup \({}^2F_4(2)\) fails to be simple or \(p\)-nilpotent for \(p\in\{2,3,5,13\}\), whereas \(L_2(13):2\) is not simple nor \(7\)-nilpotent. Regarding \(p=29\), the only maximal subgroups with order  divisible by \(29\) are those isomorphic to \(L_2(29)\). This implies that \(Ru\) satisfies both \((\ast_{29})\) and \((\ast_{29-29'})\), a fact unique to  \(p=29\).

\smallskip

\noindent\((13)\) \(G\cong Suz\). The maximal subgroup \(J_2:2\) is neither non-abelian simple nor \(p\)-nilpotent for \(p\in\{2,3,5,7\}\). In addition, \(M_{12}:2\) fails both properties for \(p=11\), and  \(L_3(3):2\) fails similarly for \(p=13\). In view of this, \(Suz\) satisfies neither \((\ast_p)\) nor \((\ast_{p-p'})\) for any prime \(p\in\pi(Suz)\).

\smallskip

\noindent\((14)\) \(G\cong O'N\). The maximal subgroup \(L_3(7):2\) is neither non-abelian simple nor \(p\)-nilpotent for \(p\in\{2,3,7,19\}\), whereas  \((3^2:4\times A_6).2\) is neither \(5\)-nilpotent nor simple. For \(p=11\), the only maximal subgroups whose orders are divisible by \(11\) are isomorphic to \(J_1\) and \(M_{11}\), both simple. Similarly,  for \(p=31\), the only maximal subgroups whose orders are divisible by \(31\) are those that belong to two classes of subgroups isomorphic to \(L_2(31)\),  which are also simple. This means that \(O'N\) satisfies \((\ast_p)\) and \((\ast_{p-p'})\) precisely for \(p\in\{11,31\}\).

\smallskip

\noindent\((15)\) \(G\cong \mathrm{Co}_3\). The maximal subgroup \(McL:2\) is not simple or \(p\)-nilpotent for \(p\in\{2,3,5,7,11\}\). For \(p=23\), the only class of maximal subgroups having order  divisible by \(23\) is \(M_{23}\), which is non-abelian simple. Consequently,  \(\mathrm{Co}_3\) satisfies both \((\ast_{23})\) and \((\ast_{23-23'})\). This property occurs only when \(p=23\).

\smallskip

\noindent\((16)\) \(G\cong \mathrm{Co}_2\). The maximal subgroup \(U_6(2):2\) is neither non-abelian simple nor \(p\)-nilpotent for \(p\in\{2,3,5,7,11\}\). For \(p=23\), the only maximal subgroup with order divisible by \(23\) is \(M_{23}\), which is non-abelian simple. We find that \(\mathrm{Co}_2\) satisfies both \((\ast_{23})\) and \((\ast_{23-23'})\), and this property is unique to \(p=23\).

\smallskip

\noindent\((17)\) \(G\cong \mathrm{Fi}_{22}\). The list of maximal subgroups in \cite{Atlas} is incomplete, so we use the complete classification in \cite{KleidmanWilsonFi22,KleidmanWilsonFi22Corr}. The maximal subgroup \(2.U_6(2)\) is not simple or \(p\)-nilpotent for \(p\in\{2,3,5,7,11\}\). With regard to \(p=13\), the only maximal subgroups whose orders are divisible by \(13\) belong to two classes of subgroups isomorphic to \(O_7(3)\) and to the class of \({}^2F_4(2)'\), all of which are non-abelian simple. Consequently, \(\mathrm{Fi}_{22}\) satisfies both \((\ast_{13})\) and \((\ast_{13-13'})\), which occurs only for \(p=13\).

\smallskip

\noindent\((18)\) \(G\cong HN\). The maximal subgroup \(2 \cdot HS.2\) lacks both conditions for \(p\in\{2,3,\allowbreak 5,7,11\}\), while for \(p=19\) it is enough to consider  \(U_3(8):3\), which is not $19$-nilpotent. This shows that \(HN\) satisfies neither \((\ast_p)\) nor \((\ast_{p-p'})\) for any prime \(p\in\pi(HN)\).

\smallskip

\noindent\((19)\) \(G\cong Ly\). The maximal subgroup \(3 \cdot McL:2\) is neither non-abelian simple nor \(p\)-nilpotent for \(p\in\{2,3,5,7,11\}\), while \(5^3 \cdot L_3(5)\) is not \(31\)-nilpotent or non-abelian simple. On the other hand, \(37:18\) and \(67:22\) lack both properties for \(p=37\) and \(p=67\), respectively. Thus, \(Ly\) satisfies neither \((\ast_p)\) nor \((\ast_{p-p'})\) for any prime \(p\in\pi(Ly)\).

\smallskip

We now turn to the remaining sporadic groups whose maximal subgroups are not fully covered by \cite{Atlas}, relying instead on the corresponding complete classifications cited in the following.

\smallskip

\noindent\((20)\) \(G\cong Th\). We refer to the complete classification of its maximal subgroups given in \cite{LintonTh}. The maximal subgroup \(2^5 \cdot L_5(2)\) fails to be simple or \(p\)-nilpotent for \(p\in\{2,3,5,7,31\}\). In addition, \({}^3D_4(2):3\) is neither \(13\)-nilpotent nor simple, while \(U_3(8):6\) violates the condition for \(p=19\). We conclude that \(Th\) does not satisfy \((\ast_p)\) or \((\ast_{p-p'})\) for any prime \(p\in\pi(Th)\).

\smallskip

\noindent\((21)\) \(G\cong \mathrm{Fi}_{23}\). According to the classification of its maximal subgroups in \cite{KleidmanFi23}, the maximal subgroup \(2\cdot \mathrm{Fi}_{22}\) lacks the conditions for \(p\in\{2,3,5,7,11,13\}\). In addition, \(S_4(4):4\) is neither \(17\)-nilpotent nor simple, while \(2^{11} \cdot M_{23}\)  violates the condition for \(p=23\). Therefore, \(\mathrm{Fi}_{23}\) does not  satisfy  \((\ast_p)\) or \((\ast_{p-p'})\) for any prime \(p\in\pi(\mathrm{Fi}_{23})\).

\smallskip

\noindent\((22)\) \(G\cong \mathrm{Co}_1\). The maximal subgroups are already given in \cite{Atlas}, so no later classification is needed in this case. The maximal subgroup \(3 \cdot Suz : 2\) is neither simple nor \(p\)-nilpotent for \(p\in\{2,3,5,7,11,13\}\), while we may consider \(2^{11}:M_{24}\)  for  \(p=23\). It follows that \(\mathrm{Co}_1\) does not satisfy \((\ast_p)\) or \((\ast_{p-p'})\) for any prime \(p\in\pi(\mathrm{Co}_1)\).

\smallskip

\noindent\((23)\) \(G\cong J_4\). We refer to the complete classification of its maximal subgroups given in \cite{KleidmanJ4}. The maximal subgroup \(2^{11}:M_{24}\) fails to be simple or \(p\)-nilpotent for \(p\in\{2,3,5,7,11,23\}\). Also, the maximal subgroups \(L_2(32):5\), \(29:28\), \(37:12\), and \(43:14\) are not \(31\)-nilpotent, \(29\)-nilpotent, \(37\)-nilpotent, and \(43\)-nilpotent, respectively, nor are they simple. Consequently, \(J_4\) satisfies neither \((\ast_p)\) nor \((\ast_{p-p'})\) for any prime \(p\in\pi(J_4)\).

\smallskip

\noindent\((24)\) \(G\cong \mathrm{Fi}_{24}'\). According to the complete classification of its maximal subgroups in \cite{LintonFi24}, the maximal subgroup \(2 \cdot \mathrm{Fi}_{22}:2\) is neither non-abelian simple nor \(p\)-nilpotent for \(p\in\{2,3,5,7,11,13\}\). Moreover, \(He:2\), \(2^{11} \cdot M_{24}\), and \(29:14\) fail to be non-abelian simple or \(17\)-nilpotent, \(23\)-nilpotent, and \(29\)-nilpotent, respectively, and none of them is simple. Hence \(\mathrm{Fi}_{24}'\)  does not satisfy \((\ast_p)\) or \((\ast_{p-p'})\) for any prime \(p\in\pi(\mathrm{Fi}_{24}')\).

\smallskip

\noindent\((25)\) \(G\cong B\). We rely on the complete classification of its maximal subgroups in \cite{WilsonB}. The maximal subgroup \(2^{1+22} \cdot \mathrm{Co}_2\) lacks the conditions for \(p\in\{2,3,5,7,11,23\}\). In addition, \((2^2\times F_4(2)):2\) fails to be non-abelian simple or \(p\)-nilpotent for \(p\in\{13,17\}\), and \(HN:2\) fails to be non-abelian simple or \(19\)-nilpotent. Finally, \(5^3 \cdot L_3(5)\) and \(47:23\) are neither \(31\)-nilpotent and \(47\)-nilpotent, respectively, nor non-abelian simple groups. Therefore, \(B\) satisfies neither \((\ast_p)\) nor \((\ast_{p-p'})\) for any prime \(p\in\pi(B)\).

\smallskip

\noindent\((26)\) \(G\cong M\). We utilize the complete classification of its maximal subgroups presented  in \cite{DietrichLeePopielMonster}. The maximal subgroup \(2 \cdot B\) lacks both conditions for  \(p\in\{2,3,5,7,11,13,17,19, \allowbreak 23,31,47\}\). On the other hand, \(3 \cdot \mathrm{Fi}_{24}\), \(41:40\), and \(59:29\) also lack both properties for \(p=29\), \(p=41\) and \(p=59\), respectively. Finally, the only class of maximal subgroups of order divisible by \(71\) corresponds to the simple group \(\mathrm{PSL}_2(71)\). Thus, \(M\) satisfies both \((\ast_{71})\) and \((\ast_{71-71'})\), and this only applies to \(p=71\).

\smallskip

Gathering the positive cases obtained above yields exactly the list in the statement. Moreover, this same list also comprises the groups that satisfy \((\ast_{p-p'})\), except, as previously pointed out, for \(M_{23}\) with \(p=11\), since \(C_{23} : C_{11}\) is \(11\)-nilpotent but not \(11\)-decomposable. This completes the proof of the theorem.
\end{proof}

We now turn to the part of the problem that remains open. As noted in the Introduction,  unlike the case for alternating and sporadic groups, a complete classification  of the simple groups of Lie type satisfying $(*_p)$ or $(*_{p-p'})$  for an arbitrary odd prime $p$ remains open. The intricate structure of the maximal subgroups of finite simple groups of Lie type makes this problem substantially more difficult. For example, $L_2(11)$ satisfies condition $(*_5)$ but not $(*_{5-5'})$; the unitary groups $U_5(2)$ and $U_6(2)$ satisfy  both $(*_{11})$ and $(*_{11-11'})$; and $U_3(3)$, $L_3(4)$, $U_3(5)$ and $U_4(3)$ satisfy both $(*_7)$ and $(*_{7-7'})$. These examples follow from the maximal subgroup data in \cite{Atlas}.
\section{Proofs of Theorem E and Corollary F}

For the reader’s convenience, and since the proof of Theorem E relies heavily on it, we reproduce 
below the list from the classification of maximal subgroups of low-dimensional classical groups; see [\citenum{BHRD}, Tables 8.1 and 8.7].

\begin{theorem}\label{thm:psl2-maximals}
Let \(q=p^f>3\) be a prime power. The maximal subgroups of \(\mathrm{PSL}_2(q)\) are the following:
\begin{itemize}
\item[\normalfont(1)] dihedral groups of order \(q-1\), for \(q\geq 13\) odd, and of order \(2(q-1)\), for \(q\) even;
\item[\normalfont(2)] dihedral groups of order \(q+1\), for \(q\neq 7,9\) odd, and of order \(2(q+1)\), for \(q\) even;
\item[\normalfont(3)] a point stabilizer, of order \(q(q-1)/2\) for \(q\) odd, and of order \(q(q-1)\) for \(q\) even;
\item[\normalfont(4)] \(\mathrm{PSL}_2(q_0)\), where \(q=q_0^\ell\), with \(\ell\) an odd prime if \(q\) is odd, and with \(\ell\) prime and \(q_0>2\) if \(q\) is even;
\item[\normalfont(5)] \(\mathrm{PGL}_2(q_0)\), where \(q=q_0^2\) and \(q\) is odd;
\item[\normalfont(6)] \(S_4\), when \(q\equiv\pm1\pmod 8\), with \(q\) prime;
\item[\normalfont(7)] \(A_4\), when \(q\equiv \pm3,5,\pm13 \pmod{40}\), with \(q\) prime;
\item[\normalfont(8)] \(A_5\), when \(q\equiv\pm1\pmod {10}\), with either \(q\) prime, or \(q=p^2\) and \(p\equiv\pm3\pmod {10}\).
\end{itemize}
\end{theorem}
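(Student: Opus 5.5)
The statement is the classical list of maximal subgroups of $\mathrm{PSL}_2(q)$, reproduced from [\citenum{BHRD}, Tables 8.1 and 8.7], so the plan is to derive it from Dickson's classification of \emph{all} subgroups of $\mathrm{PSL}_2(q)$ rather than to reprove it from scratch. Dickson's theorem says that every subgroup of $\mathrm{PSL}_2(q)$, $q=p^f$, is one of the following:
\begin{itemize}
\item[(i)] a subgroup of a Borel subgroup, that is, of the point stabilizer $E_q\rtimes C_{(q-1)/d}$ with $d=\gcd(2,q-1)$;
\item[(ii)] a subgroup of a dihedral group $D_{2(q\pm1)/d}$, the normalizer of a split or non-split torus;
\item[(iii)] a subfield group $\mathrm{PSL}_2(q_0)$ or $\mathrm{PGL}_2(q_0)$ with $q_0^m=q$;
\item[(iv)] $A_4$, $S_4$ or $A_5$.
\end{itemize}
The first step is therefore to note that every maximal subgroup must be maximal among the groups on this list. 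The proof then reduces to two tasks: decide which of the candidates are genuinely maximal, and record the congruence conditions under which they exist.

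The second step treats the geometric candidates in cases (1)--(3). The Borel subgroup is the stabilizer of a point of $\mathbb P^1(\mathbb F_q)$ in a $2$-transitive action, so it is always maximal. For the torus normalizers, I would show that any overgroup is again of one of Dickson's types. Comparing orders then rules out every overgroup except the small exceptions. These produce the excluded values: for $q=5,7,9,11$, the group $D_{q-1}$ lies inside $A_4$, $S_4$ or $A_5$, or in a point stabilizer, which explains the bound $q\ge13$. Likewise $D_{q+1}$ lies in $S_4$ when $q=7$ and in $A_5\cong\mathrm{PSL}_2(4)$ when $q=9$. For even $q$ the dihedral orders double because $d=1$.

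The third step handles the subfield groups in cases (4)--(5). $\mathrm{PSL}_2(q_0)$ is contained in $\mathrm{PSL}_2(q_0^{m'})$ whenever $m'\mid m$, so maximality forces $m=\ell$ to be prime. For $q$ odd and $\ell=2$, the subgroup $\mathrm{PSL}_2(q_0)$ extends to $\mathrm{PGL}_2(q_0)$ inside $\mathrm{PSL}_2(q_0^2)$, because every element of $\mathbb F_{q_0}$ is a square in $\mathbb F_{q_0^2}$. Hence in that case the maximal subgroup is $\mathrm{PGL}_2(q_0)$, and $\ell$ must be odd in (4). For $q$ even, $\mathrm{PSL}_2(2)\cong S_3$ is contained in a dihedral group, so $q_0>2$ is required.

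The last step, which I expect to be the main obstacle, is the exceptional subgroups in cases (6)--(8). Here one must do careful bookkeeping of congruences to decide when $A_4$, $S_4$ and $A_5$ exist and when they are maximal.
\begin{itemize}
\item $S_4$ exists precisely when $q\equiv\pm1\pmod8$. It is maximal only when $q=p$, since otherwise it falls inside a subfield group $\mathrm{PGL}_2(\sqrt q)$.
\item $A_4$ is maximal exactly when $S_4$ does not exist and $A_4$ is not swallowed by $A_5$. Combining the conditions modulo $8$ and modulo $5$ gives the residues $\pm3,5,\pm13\pmod{40}$.
\item $A_5$ exists when $q\equiv\pm1\pmod{10}$, or $q=p^2$ with $p\equiv\pm3\pmod{10}$. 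It must not lie in a subfield subgroup, which is what forces $q$ to be prime or $p^2$.
\end{itemize}
I would verify these conditions by counting elements of orders $3$, $4$ and $5$, which requires $3$, $4$ or $5$ to divide $(q\pm1)/2$, together with the subfield containment argument. Small $q$ can be checked directly against the Atlas.
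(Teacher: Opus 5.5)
The paper gives no proof of this statement. It is reproduced from the tables of \cite{BHRD} and used as a black box in the proof of Theorem E. Your route is different: you start from Dickson's list of all subgroups of $\mathrm{PSL}_2(q)$ and test each candidate for containment in the others. This is the classical derivation, and it is sound in outline. Over the citation, it buys an explanation of each side condition: $q\ge 13$, $q\ne 7,9$, $\ell$ odd in (4), $q_0>2$ for $q$ even, and the residues mod $40$. The price is exactly the bookkeeping you defer. Several of your sketched steps need repair before they go through.

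\emph{The $S_4$ step.} For $q=p^f$ with $f>1$ odd, there is no subfield group $\mathrm{PGL}_2(\sqrt q)$. Instead, $p^f\equiv p\pmod 8$ forces $p\equiv\pm1\pmod 8$, so $S_4\le\mathrm{PSL}_2(p)<\mathrm{PSL}_2(q)$. The group $\mathrm{PGL}_2(p^{f/2})$ is the right overgroup only for $f$ even. For $q=9$ this gives $S_4=\mathrm{PGL}_2(3)$, which is case (5).

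\emph{The torus normalizers.} Comparing group orders does not eliminate all overgroups. For $q=q_0^2$ odd, $|D_{q-1}|$ divides $|\mathrm{PGL}_2(q_0)|=q_0(q-1)$. You must instead compare the cyclic subgroup of order $(q\pm1)/2$ with the largest element order of the candidate overgroup. That order is $q_0+1$ in $\mathrm{PGL}_2(q_0)$ and at most $5$ in $A_4$, $S_4$, $A_5$. This comparison is what isolates $q=9$ and $q\le 11$.

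\emph{The $A_4$ step.} Counting elements of orders $3,4,5$ gives only necessary conditions for existence. Showing that $A_4$ is swallowed needs conjugacy information, not merely the existence of some $S_4$ or $A_5$.
\begin{itemize}
\item For $q\equiv\pm3\pmod 8$, the Klein four-groups $V$ are Sylow $2$-subgroups. So every $A_4$ equals some $N(V)$, all copies of $A_4$ are conjugate, and hence all lie in an $A_5$ once one exists.
\item For $q\equiv\pm1\pmod 8$, one has $N(V)\cong S_4$ for every Klein four-group $V$.
\end{itemize}

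\emph{The $A_5$ step.} Your existence criterion conflates existence with maximality. The clause ``$q=p^2$, $p\equiv\pm3\pmod{10}$'' is redundant for existence, since then $q\equiv-1\pmod{10}$; it is really a maximality condition. Conversely, $A_5$ also exists for $p=5$ and for $q=4^k$, where it is a subfield group covered by (4).

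With these repairs, the argument yields the stated list.
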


\begin{proof}[Proof of Theorem E]
Assume that \(G\) satisfies \((\ast_2)\). By [\citenum{Monakhov}, Theorem 2], we know that \(G\) has a chief series \(1\unlhd T\unlhd G\), where \(T\cong \mathrm{PSL}_2(q)\), with \(q=p^f\) for some prime \(p\) and some integer \(f\geq 1\), and either \(|G:T|=1\) or \(|G:T|\) is prime. We distinguish these two cases.

\medskip
Case 1. Assume first that \(|G:T|=1\), that is, \(G \cong \mathrm{PSL}_2(q)\) is simple with \(q=p^f\). In the following, we determine when this group satisfies \((\ast_2)\) using Theorem~\ref{thm:psl2-maximals}.

First, suppose that \(q\) is even, so \(p=2\). Then \(q\geq4\), because \(\mathrm{PSL}_2(2)\cong S_3\) is not simple. By Theorem~\ref{thm:psl2-maximals}, \(G\) has a maximal subgroup isomorphic to \(C_2^f\rtimes C_{q-1}\), which is the stabilizer of a point of the projective line. However, this subgroup is neither non-abelian simple nor \(2\)-nilpotent, so we can assume henceforth that \(q\) is odd, and so is \(p\).

Assume first that \(f=1\), so \(q=p\) is prime. Taking into account Theorem~\ref{thm:psl2-maximals}, the point stabilizer and the maximal dihedral subgroups are \(2\)-nilpotent, while \(A_5\), when it occurs, is non-abelian simple. Thus, the only possible obstructions in this  case are \(A_4\) and \(S_4\). The subgroup \(S_4\) arises as a maximal subgroup when \(p\equiv\pm1\pmod 8\), so we must have \(p\equiv\pm3\pmod 8\). Under this assumption, \(A_4\) is a maximal subgroup precisely when \(p\equiv \pm3,5,\pm13 \pmod{40}\), so we must additionally require that \(p\not\equiv\pm3,5,\pm13 \pmod{40}\). Therefore, in the prime case, \(G\) satisfies \((\ast_2)\) if and only if \(p\equiv\pm3\pmod 8\) and \(p\not\equiv\pm3,5,\pm13 \pmod{40}\). A straightforward verification shows that this is equivalent to saying  \(p\equiv \pm11,\pm19\pmod {40}\).

Suppose now that \(f>1\). If \(f\) is even, then \(q=q_0^2\), with \(q_0=p^{f/2}\), and Theorem~\ref{thm:psl2-maximals} provides a maximal subgroup isomorphic to \(\mathrm{PGL}_2(q_0)\). This subgroup is neither non-abelian simple nor \(2\)-nilpotent. Hence, we can assume \(f\) to be odd, and \(\mathrm{PGL}_2(q_0)\) no longer occurs as a maximal subgroup. In this case, the subgroups \(A_4\) and \(S_4\) are not maximal subgroups according to Theorem~\ref{thm:psl2-maximals}. Thus, the only obstruction remaining is a maximal subgroup \(\mathrm{PSL}_2(q_0)\) with \(q_0=3\), since \(\mathrm{PSL}_2(3)\cong A_4\). This happens precisely when \(q=3^\ell\), with \(\ell\) an odd prime, and thus, these cases must be excluded. Conversely, apart from these excluded cases, we have: the point stabilizer and the dihedral maximal subgroups, which are \(2\)-nilpotent; the maximal subgroups \(\mathrm{PSL}_2(q_0)\) satisfying \(q_0\neq3\), so they are non-abelian simple; and \(A_5\), when it occurs, is non-abelian simple as well. Accordingly, \(G\) satisfies \((\ast_2)\), which completes the proof of part (1) of the statement.

\medskip
Case 2. Assume that \(|G:T|=s\) is prime and write \(G/T\cong C_s\). We first claim that every maximal subgroup of \(G\) different from \(T\) is \(2\)-nilpotent. Let \(L\neq T\) be a maximal subgroup of \(G\). If \(L\) were non-abelian simple, then \(L\cap T\unlhd L\), so \(L\cap T=1\) or \(L\leq T\). Since the latter case is impossible, it follows that \(LT=G\) and \(L\cong G/T\cong C_s\), a contradiction. Thus, \(L\) is \(2\)-nilpotent, as claimed. Since \(T\) is normal in \(G\), we deduce that every maximal subgroup of \(G\) is normal or \(2\)-nilpotent. We can then apply [\citenum{Shao-Beltran}, Theorem A], and taking into account that \(T\) is the unique non-abelian chief factor of \(G\), we conclude that \(T\cong \mathrm{PSL}_2(p^{2^a})\), where \(p\) is odd and either \(a\geq1\), or \(a=0\) and \(p\equiv\pm1\pmod 8\).

Next, we show that \(C_G(T)=1\). Suppose, on the contrary, that \(C=C_G(T)>1\). The fact that \(T\) is non-abelian simple implies that \(C\cap T=1\). Moreover, since \(T\) is maximal in \(G\), we have \(G=TC=T\times C\), with \(C\cong C_s\). Now, not every maximal subgroup of \(T\) can be \(2\)-nilpotent; otherwise \(T\) would be minimal non-\(2\)-nilpotent and hence solvable by It\^o's theorem [\citenum{Huppert}, Theorem IV.5.4]. If we take \(U\) to be a maximal subgroup of \(T\) that is not \(2\)-nilpotent, then \(U\times C\) would be a maximal subgroup of \(G\) that is neither non-abelian simple nor \(2\)-nilpotent. This contradiction shows that \(C_G(T)=1\).

Therefore, \(G\leq \operatorname{Aut}(T)\) and \(G/T\leq \operatorname{Out}(T)\). Since \(T\cong \mathrm{PSL}_2(q)\), with \(q=p^{2^a}\) odd, it is well known that \(|\operatorname{Out}(T)|=(2,q-1)2^a=2^{a+1}\) (see, for instance, \cite{Atlas}). Hence, \(G/T\) has order $2$. We use the standard notation
\[
\mathrm{Aut}(T)=\mathrm{P\Gamma L}_2(q)=\langle T,\delta,\varphi\rangle,
\]
where $\delta$ and  $\varphi$ denote the diagonal automorphism and  the field automorphism induced by the Frobenius automorphism of  \(\mathrm{PSL}_2(q)\), respectively. We  denote by \(\widehat{\alpha}=\alpha T\) the image of \(\alpha\in \mathrm{Aut}(T)\) in \(\mathrm{Out}(T)\). Note that
\[
C_2\cong G/T\leq \mathrm{Out}(T)\cong \langle \widehat{\delta}\rangle\times \langle \widehat{\varphi}\rangle\cong C_2\times C_{2^a}.
\]
If \(a=0\), then there are  no non-trivial field automorphisms, and necessarily \(G=\langle T,\delta\rangle=\mathrm{PGL}_2(p)\). If \(a\geq1\), letting \(\theta=\varphi^{2^{a-1}}\), the involutions in \(\mathrm{Out}(T)\) are  exactly \(\widehat{\delta}\), \(\widehat{\theta}\) and \(\widehat{\delta\theta}\). Therefore, the only possibilities are: \(G=\langle T,\delta\rangle=\mathrm{PGL}_2(q)\); \(G=\langle T,\theta\rangle\); and \(G=\langle T,\delta\theta\rangle=\langle \mathrm{PSL}_2(q),\delta\varphi^{2^{a-1}}\rangle\).

Next, we discard the pure field case. Suppose that
\(G=\langle T,\theta\rangle\), and write \(q=q_0^2\), where
\(q_0=p^{2^{a-1}}\). Since \(q\geq 4\), we can apply
[\citenum{Giudici2007PSL2Maximal}, Proposition 3.1] (converse direction) and then there exists a  maximal subgroup $H$ of \(G\)  such that \(H=N_G(H_0)\), where
\(H_0\cong \mathrm{PGL}_2(q_0)\).   Moreover, by
[\citenum{Giudici2007PSL2Maximal}, Lemma 2.4], we have
\(H/H_0\cong G/T\cong C_2\). In particular, \(H\) is
not simple, and is not \(2\)-nilpotent
because \(H_0\cong \mathrm{PGL}_2(q_0)\) is not. Indeed,
\(\mathrm{PGL}_2(3)\cong S_4\) is not \(2\)-nilpotent, and when
\(q_0>3\), the subgroup
\(\mathrm{PSL}_2(q_0)\unlhd \mathrm{PGL}_2(q_0)\) is non-abelian simple.
This shows that the pure field case cannot happen. Hence \(G\) is one of the groups listed in (2) and (3), which we discuss next separately.

\medskip
Suppose first that \(G=\langle \mathrm{PSL}_2(q),\delta\varphi^{2^{a-1}}\rangle\), with \(a\geq1\). By [\citenum{Giudici2007PSL2Maximal}, Theorem 1.5], the maximal subgroups of \(G\) that do not contain \(T\) are:
\[
C_p^{\,2^a}\rtimes C_{q-1},\qquad
N_G(D_{q-1}),\qquad
N_G(D_{q+1}),
\]
and possibly \(N_G(\mathrm{PSL}_2(q_0))\), where \(q=q_0^\ell\) for some odd prime \(\ell\). However, \(q=p^{2^a}\), so the last case cannot occur. The first subgroup is certainly 2-nilpotent. 
Let \(H\) be one of the remaining maximal subgroups (dihedral normalizer)  and put \(H_0=H\cap T\). Then \(H_0\) is equal to
\( D_{q-1}\) or \(D_{q+1}
\)
by [\citenum{Giudici2007PSL2Maximal}, Lemma 2.4]. Furthermore, applying this lemma, we have \(H=N_G(H_0)\) and \(H/H_0\cong G/T\cong C_2\).

Suppose that \(H_0=D_{q-1}\). Following the notation of [\citenum{Giudici2007PSL2Maximal}, Theorem 2.2], this group has order 
\(q-1\), so \(H_0\cong C_{(q-1)/2}\rtimes C_2\). Write \((q-1)/2=2^b m\), with \(m\) odd, and let \(N =C_m\) be the subgroup of order \(m\) that lies in $C_{(q-1)/2}$. Since \(N\) is characteristic in \(H_0\) and \(H=N_G(H_0)\), we have \(N\unlhd H\). In addition,
\[
|H:N|=|H:H_0||H_0:N|=2\cdot 2^{b+1}=2^{b+2}.
\]
This allows us  to assert that \(H\) is \(2\)-nilpotent. The case where \(H_0=D_{q+1}\) can be handled  similarly. Therefore, we find that the only maximal subgroup  of $G$ containing \(T\) is \(T\) itself, which is non-abelian simple, and those that do not contain $T$ are $2$-nilpotent. This means that \(G\) satisfies \((\ast_2)\), so part (3) of the theorem  is proved. 

\medskip
Finally, suppose that \(G=\mathrm{PGL}_2(q)\), where \(q=p^{2^a}\) with \(p\) an odd prime, \(a\geq0\), and \(p\equiv\pm1\pmod 8\) when \(a=0\). By [\citenum{Giudici2007PSL2Maximal}, Theorem 3.5], the maximal subgroups of \(G\) that do not contain \(T=\mathrm{PSL}_2(q)\) are
\[
C_p^{\,2^a}\rtimes C_{q-1},\qquad
D_{2(q-1)} \quad (q\neq 5),\qquad D_{2(q+1)},
\]
possibly \(S_4\) when \(q=p\equiv\pm3\pmod 8\), and possibly \(\mathrm{PGL}_2(q_0)\) when \(q=q_0^\ell\), with \(\ell\) an odd prime. The last two cases do not occur under these assumptions: if \(a=0\), then \(q=p\equiv\pm1\pmod 8\), whereas if \(a\geq1\), then \(q\) is not prime. But \(q=p^{2^a}\) cannot be written as \(q=q_0^\ell\) with \(\ell\) an odd prime. Thus, the only maximal subgroups of $G$ that do not contain \(T\) are \(C_p^{2^a}\rtimes C_{q-1}\), \(D_{2(q-1)}\) and \(D_{2(q+1)}\), all of which are \(2\)-nilpotent. Also, the only maximal subgroup of $G$ containing \(T\) is obviously \(T\) itself, which is non-abelian simple. This proves that \(G\) satisfies \((\ast_2)\), which gives case (2) and finishes the proof.
\end{proof}

\begin{proof}[Proof of Corollary F]
Suppose, for a contradiction, that \(G\) is a non-solvable group satisfying \((\ast_{2-2'})\). Then \(G\) also satisfies \((\ast_2)\). We distinguish two cases depending on whether \(G\) is simple or not.

\medskip
Case 1.  Assume first that \(G\) is non-simple. By Theorem E, and following its notation, we have either
\(
G\cong \mathrm{PGL}_2(q)
\)
or
\(
G\cong\langle \mathrm{PSL}_2(q),\delta\varphi^{2^{a-1}}\rangle,
\)
where \(q=p^{2^a}\) satisfies the corresponding arithmetic conditions. Since \(q\geq 5\) is odd, at least one of \(q-1\) and \(q+1\) has a non-trivial odd divisor.

Suppose first that \(G\cong \mathrm{PGL}_2(q)\). By [\citenum{Giudici2007PSL2Maximal}, Theorem 3.5], \(G\) has  maximal subgroups isomorphic to \(D_{2(q+\varepsilon)}\), for \(\varepsilon\in\{1,-1\}\). Such a subgroup is neither non-abelian simple nor \(2\)-decomposable whenever \(\varepsilon\in\{1,-1\}\) is chosen so that \(q+\varepsilon\) has a non-trivial odd divisor. This contradicts \((\ast_{2-2'})\).

Suppose now that
\(
G\cong\langle \mathrm{PSL}_2(q),\delta\varphi^{2^{a-1}}\rangle.
\)
By [\citenum{Giudici2007PSL2Maximal}, Theorem 1.5], \(G\) has a maximal subgroup
\(
H=N_G(D_{q+\varepsilon}),
\)
where \(\varepsilon\in\{1,-1\}\) is chosen so that \(q+\varepsilon\) has a non-trivial odd divisor. Set \(T=\mathrm{PSL}_2(q)\) and \(H_0=H\cap T\). Then \(H_0=D_{q+\varepsilon}\), and by [\citenum{Giudici2007PSL2Maximal}, Lemma 2.4], we have \(H/H_0\cong G/T\cong C_2\). In particular, \(H\) is not non-abelian simple. Moreover, \(H\) is not \(2\)-decomposable, since \(H_0\) is not. This again yields a contradiction.

\medskip
Case 2. Assume now that \(G\) is non-abelian simple. Then Theorem E gives
\(
G\cong \mathrm{PSL}_2(q),
\)
where \(q=p^f\geq 5\) is odd and satisfies the arithmetic conditions appearing
in (1). In particular, \(q\geq 11\). If \(q=11\), then Theorem~\ref{thm:psl2-maximals} provides a maximal subgroup
isomorphic to \(D_{12}\), which is neither non-abelian simple nor
\(2\)-decomposable. Thus, we may assume that \(q\geq 13\). By
Theorem~\ref{thm:psl2-maximals}, \(G\) has maximal subgroups isomorphic to
\(D_{q-1}\) and \(D_{q+1}\). Choose \(\varepsilon\in\{1,-1\}\) such that
\(q+\varepsilon\) has a non-trivial odd divisor. Then \(D_{q+\varepsilon}\)
is neither non-abelian simple nor \(2\)-decomposable.

\medskip
In either case, a contradiction is reached. As a result, \(G\) must be solvable as desired.
\end{proof}

\bigskip
\noindent
{\bf Acknowledgments}
The results of this research are part of the second author's Ph.D. thesis at the Universitat Jaume
I of Castell\'on. The first author is partially supported by the National Nature Science Fund (No. 12071181) of the People's Republic of China.

\medskip
\noindent
{\bf Data availability} Data sharing is not applicable to this article as no data sets were generated or analyzed during
the current study.

\bibliographystyle{plain}

\begin{thebibliography}{99}

\bibitem{BeltranShao2023Invariant}
Beltrán, A., Shao, C.G.:
New conditions on maximal invariant subgroups that imply solubility.
\newblock {\em Results Math.} {78}, no. 149, (2023).
\newblock doi:10.1007/s00025-023-01923-5.

\bibitem{BHRD}
Bray, J.N., Holt, D.F., Roney-Dougal, C.M.:
The Maximal Subgroups of the Low-Dimensional Finite Classical Groups.
\newblock London Math. Soc. Lecture Note Ser., vol. 407.
Cambridge University Press, Cambridge (2013).
\newblock doi:10.1017/CBO9781139192576.

\bibitem{Atlas}
Conway, J.H., Curtis, R.T., Norton, S.P., Parker, R.A., Wilson, R.A.:
Atlas of Finite Groups.
\newblock Oxford University Press, Oxford (1985).

\bibitem{DietrichLeePopielMonster}
Dietrich, H., Lee, M., Popiel, T.:
The maximal subgroups of the Monster.
\newblock {\em Adv. Math.} {469}, no. 110214 (2025).
\newblock doi:10.1016/j.aim.2025.110214.

\bibitem{GAP4}
GAP -- Groups, Algorithms, and Programming, Version 4.14.0
\newblock The  GAP  Group (2024).
\newblock \url{https://www.gap-system.org}.

\bibitem{Giudici2007PSL2Maximal}
Giudici, M.:
Maximal subgroups of almost simple groups with socle \(\mathrm{PSL}(2,q)\).
\newblock arXiv:math/0703685 [math.GR] (2007).

\bibitem{Huppert}
Huppert, B.:
Endliche Gruppen. I.
\newblock Springer-Verlag, Berlin, Heidelberg, New York (1967).

\bibitem{Jones}
Jones, G.A.:
Primitive permutation groups containing a cycle.
\newblock {\em Bull. Aust. Math. Soc.} {89}, no. 1, 159--165 (2014).
\newblock doi:10.1017/S000497271300049X.

\bibitem{KleidmanFi23}
Kleidman, P.B., Parker, R.A., Wilson, R.A.:
The maximal subgroups of the Fischer group \(\mathrm{Fi}_{23}\).
\newblock {\em J. London Math. Soc.} {(2) 39}, no. 1, 89--101 (1989).
\newblock doi:10.1112/jlms/s2-39.1.89.

\bibitem{KleidmanWilsonFi22}
Kleidman, P.B., Wilson, R.A.:
The maximal subgroups of \(\mathrm{Fi}_{22}\).
\newblock {\em Math. Proc. Camb. Phil. Soc.} {102}, no. 1, 17--23 (1987).
\newblock doi:10.1017/S0305004100067001.

\bibitem{KleidmanWilsonFi22Corr}
Kleidman, P.B., Wilson, R.A.:
Corrigendum: The maximal subgroups of \(\mathrm{Fi}_{22}\).
\newblock {\em Math. Proc. Camb. Phil. Soc.} {103}, 383 (1988).

\bibitem{KleidmanJ4}
Kleidman, P.B., Wilson, R.A.:
The maximal subgroups of \(J_4\).
\newblock {\em Proc. London Math. Soc.} {(3) 56}, no. 3, 484--510 (1988).
\newblock doi:10.1112/plms/s3-56.3.484.

\bibitem{KurzweilStellmacher}
Kurzweil, H., Stellmacher, B.:
The Theory of Finite Groups. An Introduction.
\newblock Universitext.
Springer, New York (2004).

\bibitem{LPS}
Liebeck, M.W., Praeger, C.E., Saxl, J.:
The maximal subgroups of the finite alternating and symmetric groups.
\newblock {\em J. Algebra} {111}, no. 2, 365--383 (1987).

\bibitem{LintonTh}
Linton, S.A.:
The maximal subgroups of the Thompson group.
\newblock {\em J. London Math. Soc.} {(2) 39}, no. 1, 79--88 (1989).
\newblock doi:10.1112/jlms/s2-39.1.79.

\bibitem{LintonFi24}
Linton, S.A., Wilson, R.A.:
The maximal subgroups of the Fischer groups \(\mathrm{Fi}_{24}\) and \(\mathrm{Fi}_{24}'\).
\newblock {\em Proc. London Math. Soc.} {(3) 63}, no. 1, 113--164 (1991).

\bibitem{Kourovka}
Mazurov, V.D., Khukhro, E.I. (eds.):
Unsolved Problems in Group Theory. The Kourovka Notebook, 19th edn.
\newblock Russian Academy of Sciences, Siberian Branch, Institute of Mathematics,
Novosibirsk (2018).

\bibitem{Monakhov}
Monakhov, V.S., Tyutyanov, I.N.:
On finite groups with given maximal subgroups.
\newblock {\em Siberian Math. Journal} {55}, no. 3, 451--456 (2014). \newblock doi:10.1134/S0037446614030069

\bibitem{Shao-Beltran}
Shao, C.G., Beltrán, A.:
Finite groups whose maximal subgroups are 2-nilpotent or normal.
\newblock {\em Bull. Malays. Math. Sci. Soc.} {47}, no. 142, (2024).

\bibitem{WilsonB}
Wilson, R.A.:
The maximal subgroups of the Baby Monster. I.
\newblock {\em J. Algebra} {211}, no. 1, 1--14 (1999).
\newblock doi:10.1006/jabr.1998.7601.

\end{thebibliography}

\end{document}